\renewcommand{\fnum@figure}{Fig. \thefigure}
\newcommand{\mylabel}[2]{#2\def\@currentlabel{#2}\label{#1}}
\newtheorem{theorem}{Theorem}[section]
\newtheorem{theorem*}[theorem]{*Theorem}
\newtheorem{prop}[theorem]{Proposition}
\newtheorem{lemma}[theorem]{Lemma}
\newtheorem{cor}[theorem]{Corollary}
\newtheorem{prop*}[theorem]{*Proposition}
\newtheorem{lema*}[theorem]{*Lemma}
\newtheorem{cor*}[theorem]{*Corollary}
\theoremstyle{definition}
\newtheorem{definition}[theorem]{Definition}
\newtheorem{question}[theorem]{Question}
\newtheorem{definition*}[theorem]{*Definition}
\newtheorem{defs*}[theorem]{*Definitions}
\newtheorem{note}[theorem]{Note}
\newtheorem{claim}[theorem]{Claim}
\newtheorem{remark}[theorem]{Remark}
\newtheorem{conjecture}[theorem]{Conjecture}
\title{Gromov-Hausdorff distances from simply connected geodesic spaces to the circle.}
\author[1]{Saúl Rodríguez Martín}
\affil[1]{The Ohio State University, rodriguezmartin.1@osu.edu}
\date{\vspace{-20pt}}
\begin{document}
\maketitle

\begin{abstract}
We prove that the Gromov-Hausdorff distance from the circle with its geodesic metric to any simply connected geodesic space is never smaller than $\frac{\pi}{4}$. We also prove that this bound is tight through the construction of a
simply connected geodesic space $\mathrm{E}$ which attains the lower bound $\frac{\pi}{4}$. We deduce the first statement from a general result that we also establish which gives conditions on how small the Gromov-Hausdorff distance between
two geodesic metric spaces $(X, d_X)$ and $(Y, d_Y )$ has to be in order for
$\pi_1(X)$ and $\pi_1(Y)$ to be isomorphic.
\end{abstract}

\section{Introduction}

In the following, for $n\geq1$ we give the unit sphere $\mathbb{S}^n:=\{z\in\mathbb{R}^{n+1};\|z\|=1\}$ its intrinsic metric $d_{\mathbb{S}^n}$. That is, for any two points $p,q\in\mathbb{S}^n$ seen as vectors in $\mathbb{R}^{n+1}$, $d_{\mathbb{S}^n}(p,q)$ is the angle between $p$ and $q$.

The Gromov-Hausdorff (GH) distance provides a quantitative measure of how far two metric spaces are from being isometric. Since being introduced by Edwards (\cite{Ed}, 1975) and Gromov (\cite{Gr1,Gr2}, 1981), it has proved useful in the study of, for example, shapes formed by point cloud data \cite{MS,BBK}, convergence results for sequences of Riemannian manifolds \cite{CC,PW,C1,C2}, differentiability results in metric measure spaces \cite{Ke,Ch} or the stability of topological invariants of metric spaces under small deformations (see \cite{Pe} and the present article).

However, it is hard to find the exact value of the GH distance between two given metric spaces. 
There have been recent efforts to determine the precise value of the GH distance between spheres. In \cite[Theorem B]{LMS} it is proved that $d_{\textup{GH}}(\mathbb{S}^n,\mathbb{S}^m)\geq\frac{1}{2}\arccos\left(\frac{-1}{m+1}\right)$ for all $n>m$ which implies that 
\begin{equation}
\label{eq:d1n}d_{\textup{GH}}(\mathbb{S}^n,\mathbb{S}^1)\geq\frac{\pi}{3}
\end{equation}
for all $n\geq 2$. Via these lower bounds and matching upper bounds, the authors find the precise value of the GH distance for the pairs $(\mathbb{S}^1,\mathbb{S}^2)$, $(\mathbb{S}^1,\mathbb{S}^3)$, and $(\mathbb{S}^2,\mathbb{S}^3)$.  Similarly, in \cite[Lemma 2.3]{Ka} it was proved that for any interval $I\subseteq\mathbb{R}$ we have $d_{\textup{GH}}(I,\mathbb{S}^1)\geq\frac{\pi}{3}$ (see also \cite[Prop. B.1]{LMS}; in \cite[Theorem 6.8]{JT} the exact GH distances from $\mathbb{S}^1$ to intervals of any length $\lambda\in[0,\infty)$ are computed). In \cite{ABC} the authors identify  lower bounds for $d_{\textup{GH}}(\mathbb{S}^m,\mathbb{S}^n)$ that are often tighter than the ones from \cite{LMS}.

Via considerations related to persistent homology and the filling radius, in \cite[Remark 9.19]{LMO} it was deduced that for any compact geodesic metric space $X$ one has $d_{\textup{GH}}(X,\mathbb{S}^1)\geq\frac{\pi}{6}$ and, as a generalization of Equation (\ref{eq:d1n}),  the following conjecture was formulated:
\begin{conjecture}[{\cite[Conjecture 4]{LMO}}]\label{45dfgty6rrr}
For any compact, simply connected geodesic space $X$ we have $d_{\textup{GH}}(X,\mathbb{S}^1)\geq\frac{\pi}{3}$.
\end{conjecture}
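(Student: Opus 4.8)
The plan is to derive the inequality from a general $\pi_1$-comparison principle: when $d_{\textup{GH}}$ between two compact geodesic spaces lies below an explicit threshold, their fundamental groups are isomorphic. Since $\pi_1(\mathbb{S}^1)\cong\mathbb{Z}$ while $\pi_1(X)=0$, applying such a principle to the pair $(X,\mathbb{S}^1)$ forces $d_{\textup{GH}}(X,\mathbb{S}^1)$ above the threshold, and the entire task reduces to showing the threshold can be taken to be $\pi/3$. Concretely I argue by contradiction: suppose $d_{\textup{GH}}(X,\mathbb{S}^1)=\eta<\pi/3$ and fix a correspondence $R\subseteq X\times\mathbb{S}^1$ with $\mathrm{dis}(R)=2\eta<2\pi/3$. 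Selecting for each point a partner in $R$ yields (not necessarily continuous) maps $\phi\colon\mathbb{S}^1\to X$ and $\psi\colon X\to\mathbb{S}^1$ that are $2\eta$-isometries and satisfy $d_{\mathbb{S}^1}(\psi(\phi(p)),p)\le 2\eta$ for all $p$.

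Using that $X$ is geodesic I promote $\phi$ to a genuine continuous loop $\overline\phi\colon\mathbb{S}^1\to X$: fix nodes $p_0,\dots,p_n=p_0$ at spacing $\delta=2\pi/n$ and join consecutive images $\phi(p_i),\phi(p_{i+1})$ by shortest geodesics, each of length $\le\delta+2\eta$. Because $X$ is simply connected, $\overline\phi$ bounds a disk, i.e.\ there is a continuous $H\colon D^2\to X$ with $H|_{\partial D^2}=\overline\phi$. I would then transport $H$ back to $\mathbb{S}^1$ by a simplicial approximation of $\psi$: triangulate $D^2$ finely, send each vertex $v$ to $\psi(H(v))$, fill each edge by a shortest arc and each face by geodesic coning. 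The resulting disk $\overline H\colon D^2\to\mathbb{S}^1$ certifies that its boundary loop $\overline F$ — the piecewise-geodesic interpolation of $p_i\mapsto\psi(\phi(p_i))$ — is null-homotopic, so $\deg\overline F=0$. On the other hand, each node obeys $d_{\mathbb{S}^1}(\overline F(p_i),p_i)\le 2\eta<\pi$, so provided $\overline F$ can be shown to stay within distance $<\pi$ of the identity everywhere, it is homotopic to $\mathrm{id}_{\mathbb{S}^1}$ and $\deg\overline F=1$. The contradiction $0\ne 1$ is exactly the assertion that $\pi_1(X)$ cannot vanish, yielding $d_{\textup{GH}}(X,\mathbb{S}^1)\ge\pi/3$.

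The entire difficulty, and what I expect to be the main obstacle, is to carry out the two homotopies above with the threshold pushed all the way to $\pi/3$ rather than to a smaller constant. Two scales must be controlled simultaneously in the fine limit $\delta\to 0$: each geodesic coning in $\overline H$ is legitimate only when the three $\psi$-images of a face lie in an arc shorter than $\pi$, and each arc of $\overline F$ is unambiguous only when consecutive values are closer than $\pi$, for which the crude estimate is
\[
d_{\mathbb{S}^1}\bigl(\psi\phi(p_i),\psi\phi(p_{i+1})\bigr)\le d_X\bigl(\phi(p_i),\phi(p_{i+1})\bigr)+2\eta\le\delta+4\eta.
\]
Naive bookkeeping of this type loses constant factors — it reproduces only the $\pi/6$ bound of \cite{LMO}, and even optimized it stalls at the threshold $4\eta<\pi$, i.e.\ at $\pi/4$ — so the crux is to eliminate the waste and certify both homotopies for every $\eta<\pi/3$, equivalently for distortion up to $2\pi/3$. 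I would attack this by tracking a genuine winding number: lift $\overline F$ to the universal cover $\mathbb{R}\to\mathbb{S}^1$ and bound its total increment using the $2\eta$-isometry property of $\phi$ and $\psi$ in tandem, rather than bounding each displacement separately, so as to rule out that the short geodesic arcs of $\overline\phi$ collectively accumulate an extra turn under $\psi$. Establishing that no such extra turn can occur below distortion $2\pi/3$ — that is, that $\pi/3$ is the correct threshold in the $\pi_1$-comparison principle — is where I expect the essential and most delicate work of the argument to lie.
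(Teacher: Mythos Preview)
The proposal cannot succeed because the statement is \emph{false}. In this paper the item labelled ``Conjecture'' is not a theorem to be proved but a conjecture from \cite{LMO} that the paper \emph{refutes}: \Cref{Counterexample} exhibits a simply connected geodesic space $\mathrm{E}$ (a metric tree of total length $\tfrac{5\pi}{4}$) with $d_{\textup{GH}}(\mathrm{E},\mathbb{S}^1)=\tfrac{\pi}{4}$. So the sharp constant is $\tfrac{\pi}{4}$, not $\tfrac{\pi}{3}$, and any argument purporting to establish $d_{\textup{GH}}(X,\mathbb{S}^1)\geq\tfrac{\pi}{3}$ for all simply connected geodesic $X$ must contain an error.

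In fact you locate the error yourself. Your own bookkeeping gives the threshold $4\eta<\pi$, i.e.\ exactly $\eta<\tfrac{\pi}{4}$, and you note that this is where the ``optimized'' estimate ``stalls''. That is not slack to be squeezed out by a cleverer winding-number argument; it is the true answer. The paper's \Cref{34rf43ewewwe} is precisely the $\pi_1$-comparison principle you sketch, with the hypothesis that loops of diameter $<4D$ are nullhomotopic, and applied to $\mathbb{S}^1$ (where loops of diameter $<\pi$ are null) it yields $D=\tfrac{\pi}{4}$ --- your $\tfrac{\pi}{4}$ calculation, formalized. The space $\mathrm{E}$ shows that this bound is attained, so the step you flag as ``the essential and most delicate work'', namely improving $4\eta<\pi$ to $3\eta<\pi$, is impossible: running your construction with $X=\mathrm{E}$ and the explicit correspondence of \Cref{354ref34r} (distortion $\tfrac{\pi}{2}$), the transported loop $\overline{F}$ genuinely has degree $0$, and there is no contradiction to extract.
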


In \Cref{45dfgty6rrr}, the condition that $X$ is geodesic is necessary to relate the metric of $X$ to the fact that it is simply connected; if not, we could consider the space $X=\mathbb{S}^1\setminus\{1\}$, with its metric inherited from  $\mathbb{S}^1$. Then $X$ is simply connected, but we clearly have $d_{\textup{GH}}(X,\mathbb{S}^1)=0$. The present article gives a complete answer to \Cref{45dfgty6rrr}; we state the main results after recalling some standard notation.

Let $( X,d_{ X})$ be a metric space, and let $A,B\subseteq  X$ be nonempty and let $x\in X$. The distance from $x$ to $A$ is defined as $d_{ X}(x,A)=\inf_{a\in A}d_{ X}(x,a)$,
and the \textit{Hausdorff distance} between $A$ and $B$ is given by
\[
d^{ X}_{\text{H}}(A,B)=\max\left(\sup_{a\in A}d_{ X}(a,B),\sup_{b\in B}d_{ X}(b,A)\right).
\]
Given two nonempty metric spaces $( X,d_{ X})$ and $(Y,d_{Y})$, we write $( X,d_{ X})\cong(Y,d_{Y})$, or just $ X\cong Y$ when the metrics are clear, whenever $( X,d_{ X})$ and $(Y,d_{Y})$ are isometric. The \textit{Gromov-Hausdorff (GH) distance} between $ X$ and $Y$ is defined as the value in $[0,\infty]$ given by
\[
d_{\textup{GH}}( X,Y)=
\inf\{d^{Z}_{\text{H}}( X',Y');(Z,d_{Z})\text{ metric space; } X',Y'\subseteq Z; X'\cong  X;Y'\cong Y\}.
\]

Recall that a metric space $(X,d_X)$ is a \textit{length space} when for all $x,x'\in  X$, the distance $d_X(x,x')$ is the infimum of lengths of paths $\gamma:[0,1]\to X$ with $\gamma(0)=x$ and $\gamma(1)=x'$ (cf. \cite[Chapter 2]{BBI}). If in addition we have that for all $x,x'\in X$ there is a path $\gamma$ from $x$ to $x'$ with length $d_X(x,x')$, we say that $X$ is \textit{geodesic}.

\begin{theorem}\label{Counterexample}
There exists a simply connected geodesic space \textup{E} with $d_{\textup{GH}}(\textup{E},\mathbb{S}^1)=\frac{\pi}{4}$.
\end{theorem}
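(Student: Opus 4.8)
The plan is to construct $\mathrm{E}$ explicitly as a quotient or subset of a product space that "thickens" the circle enough to become simply connected while keeping every point within $\frac{\pi}{4}$ (in a suitable correspondence sense) of $\mathbb{S}^1$. The natural first candidate is the mapping cone or cone-like object on $\mathbb{S}^1$, but a cone has too large a radius; instead I would take $\mathrm{E}$ to be $\mathbb{S}^1$ together with a single extra point $c$ (or a small family of points) placed at distance exactly $\frac{\pi}{4}$ from a suitable subset of $\mathbb{S}^1$, then pass to the induced length metric. Concretely, consider $Z = \mathbb{S}^1 \sqcup \{c\}$ with $d(c, \theta)$ defined for each $\theta\in\mathbb{S}^1$ so that the resulting length space has diameter controlled, kills $\pi_1$ (because any loop can be contracted through $c$), and realizes $d_{\mathrm{GH}}(\mathrm{E},\mathbb{S}^1)\le\frac{\pi}{4}$; one must be careful that adding $c$ at uniform distance $r$ from all of $\mathbb{S}^1$ forces the induced metric on $\mathbb{S}^1$ to change (two antipodal points would be at distance $\le 2r$ through $c$), so $r$ must be chosen as large as possible subject to not distorting $\mathbb{S}^1$, which pins down $r$ and hence the value $\frac{\pi}{4}$.

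The key steps, in order, are: (1) give the precise definition of $\mathrm{E}$ — I expect it to be the quotient of $\mathbb{S}^1\times[0,1]$ (or $\mathbb{S}^1 \sqcup$ an interval, or a "bouquet" identification) under an equivalence relation that glues one boundary circle to a point or to a shorter circle, equipped with the quotient length metric; (2) verify $\mathrm{E}$ is a geodesic space — since it is a quotient of a compact geodesic space by a closed relation, completeness plus local compactness plus the length condition give geodesic by Hopf–Rinow (\cite[Chapter 2]{BBI}); (3) verify $\mathrm{E}$ is simply connected — exhibit an explicit contraction of the generating loop $\mathbb{S}^1\hookrightarrow\mathrm{E}$, using the extra "thickness" direction; (4) prove the upper bound $d_{\mathrm{GH}}(\mathrm{E},\mathbb{S}^1)\le\frac{\pi}{4}$ by writing down an explicit correspondence $R\subseteq \mathrm{E}\times\mathbb{S}^1$ (the obvious projection-type correspondence) and bounding its distortion by $\frac{\pi}{2}$, so that $d_{\mathrm{GH}}\le\frac12\mathrm{dis}(R)\le\frac{\pi}{4}$; (5) conclude equality by invoking the general lower-bound theorem of the paper (the result from which Conjecture-type statements follow), which gives $d_{\mathrm{GH}}(\mathrm{E},\mathbb{S}^1)\ge\frac{\pi}{4}$ for any simply connected geodesic $\mathrm{E}$.

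The main obstacle I anticipate is step (4) combined with getting the constant exactly right: the construction must be tuned so that the distortion of the natural correspondence is \emph{exactly} $\frac{\pi}{2}$ and not larger, which means the gluing parameter cannot be chosen freely — enlarging the "thickness" to make contraction easier will shrink $\mathbb{S}^1$'s induced distances (via shortcuts through the new part) and thereby inflate the distortion, while shrinking it will fail to kill $\pi_1$ or will leave $\mathrm{E}$ not geodesic. Balancing these forces is where the value $\frac{\pi}{4}$ emerges, and checking the distortion bound requires a careful case analysis of geodesics in $\mathrm{E}$ (those that stay in the circle part, those that detour through the glued part, and mixed ones). A secondary subtlety is confirming simple connectivity rigorously rather than just plausibly: one should produce an explicit null-homotopy and check it is continuous at the glued locus, or alternatively apply van Kampen to a decomposition of $\mathrm{E}$ into two simply connected (or simpler) open pieces with connected intersection.
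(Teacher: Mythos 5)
There is a genuine gap: the proposal never actually constructs $\mathrm{E}$, and every candidate you sketch points in the wrong direction. Your guiding picture is to keep $\mathbb{S}^1$ inside $\mathrm{E}$ and ``thicken'' or cone it off so that the generating loop dies, and you correctly identify the resulting tension (an apex at distance $r$ from all of $\mathbb{S}^1$ contributes distortion at least $\pi-r$ against far-away circle points, while shortcuts through the added part shrink antipodal distances and contribute distortion from the other side) --- but you never resolve it, and the naive cone does not resolve it for any $r$. Moreover, ``$\mathbb{S}^1\sqcup\{c\}$ with the induced length metric'' is not well defined as a length space until you specify connecting arcs, and the quotient-of-cylinder candidate is left entirely unspecified. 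The space the paper uses is of a completely different nature: $\mathrm{E}$ is a metric \emph{tree} (hence contractible, so simple connectivity and the geodesic property are immediate), namely a segment of length $\pi$ with a segment of length $\frac{\pi}{4}$ attached at its midpoint --- a tripod of total length $\frac{5\pi}{4}$ containing no copy of $\mathbb{S}^1$ at all. The correspondence is not a projection from a bigger space onto $\mathbb{S}^1$ but the graph of an explicit folding map $\Phi:\mathbb{S}^1\to\mathrm{E}$ that wraps the eight arcs of length $\frac{\pi}{4}$ of $\mathbb{S}^1$ isometrically onto the five edges of $\mathrm{E}$, with a single jump of size $\frac{\pi}{2}$; the bound $\mathrm{dis}(R)\le\frac{\pi}{2}$ is proved in one direction from the near-Lipschitz property of $\Phi$ and in the other from the fact that $\Phi$ sends every antipodal pair to points at distance exactly $\frac{\pi}{2}$, combined with a monotonicity argument along geodesics. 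None of this is recoverable from the proposal as written.

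Your steps (4) and (5) do match the paper's logical architecture --- exhibit a correspondence of distortion at most $\frac{\pi}{2}$ for the upper bound, then quote \Cref{BigTheorem} for the lower bound --- so the skeleton is right; what is missing is the body, namely the definition of $\mathrm{E}$ and the distortion estimate, which is where the entire difficulty of the theorem lives. As evidence that the ``decorated circle'' heuristic is misleading here: \Cref{Note3efrdfd} observes that $\left(\mathbb{S}^1,\frac{d_{\mathbb{S}^1}}{2}\right)$ is at GH distance exactly $\frac{\pi}{4}$ from $\mathbb{S}^1$ yet is not simply connected, and \Cref{34we23ioepqwpw9} shows that among complete $\mathbb{R}$-trees of length at most $\frac{5\pi}{4}$ the tripod $\mathrm{E}$ is the unique optimizer --- the optimal examples arise by folding the circle onto a tree, not by filling it in.
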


\begin{theorem}\label{34rf43ewewwe}
Let $ X,Y$ be length spaces. Suppose there is a constant $D>0$ such that $d_{\textup{GH}}( X,Y)<D$ and all loops of diameter $<4D$ are nulhomotopic in $X$ and $Y$. Then $\pi_1( X)$ and $\pi_1(Y)$ are isomorphic.
\end{theorem}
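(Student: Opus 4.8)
The plan is to build, from a correspondence realizing the hypothesis $d_{\textup{GH}}(X,Y)<D$, a pair of mutually inverse isomorphisms $\Phi_\ast\colon\pi_1(X)\to\pi_1(Y)$ and $\Psi_\ast\colon\pi_1(Y)\to\pi_1(X)$ by ``transferring loops across the correspondence''. Fix a correspondence $R\subseteq X\times Y$ with distortion $\textup{dis}(R)=:\eta<2D$ (such $R$ exists because $d_{\textup{GH}}(X,Y)<D$), so that $|d_X(x,x')-d_Y(y,y')|<\eta$ whenever $(x,y),(x',y')\in R$, and basepoints with $(x_0,y_0)\in R$. Choose a small scale $\epsilon>0$; what matters is that $\epsilon$ can be taken small relative to the \emph{positive} number $2D-\eta$, and it is this slack — available precisely because $d_{\textup{GH}}(X,Y)<D$ is strict — that will keep all relevant loops below diameter $4D$. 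Given a loop $\gamma$ based at $x_0$, choose a partition $0=t_0<\dots<t_n=1$ so fine (by uniform continuity) that each arc $\gamma|_{[t_i,t_{i+1}]}$ has diameter $<\epsilon$, pick $y_i\in Y$ with $(\gamma(t_i),y_i)\in R$ (with $y_0$ at the ends), join consecutive $y_i$ by a path $\sigma_i$ in $Y$ of length $<\epsilon+\eta$ (possible since $Y$ is a length space and $d_Y(y_i,y_{i+1})<\epsilon+\eta$), and set $\Phi(\gamma):=\sigma_0\ast\dots\ast\sigma_{n-1}$. The map $\Psi$ is defined symmetrically using $R^{-1}$, which has the same distortion.

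The core of the argument is to show that $[\Phi(\gamma)]\in\pi_1(Y)$ depends only on $[\gamma]\in\pi_1(X)$; then $\Phi_\ast$ is visibly a homomorphism (compute $\Phi(\gamma\ast\gamma')$ using a partition through the midpoint). Independence of the choices — of the points $y_i$, of the connecting paths $\sigma_i$, and of the partition — each reduces to one explicitly built loop in $Y$ being nulhomotopic: a ``bigon'' or ``triangle'' assembled from two or three of the short paths and their analogues, which one checks has diameter $<\tfrac32(\epsilon+\eta)<3D$. Invariance under a based homotopy $H\colon[0,1]^2\to X$ from $\gamma$ to $\gamma'$ is the tightest point: discretize $[0,1]^2$ by a grid fine enough that each cell has $H$-image of diameter $<\epsilon$, transfer the grid-vertices across $R$ (using the $y_i$'s along the bottom edge, $y_0$ along the sides), reconnect adjacent transferred vertices by short paths, and observe that each cell now maps to a ``square'' loop in $Y$ of diameter $<2(\epsilon+\eta)$, which is $<4D$ because $\eta<2D$. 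By hypothesis all these loops are nulhomotopic, so the assignment on the $1$-skeleton extends over $[0,1]^2$, yielding a based homotopy from $\Phi(\gamma)$ to $\Phi(\gamma')$. This is exactly where the constant $4D$ is used, and why nothing smaller would do.

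Injectivity of $\Phi_\ast$ follows by ``transferring a filling back''. If $\Phi(\gamma)$ bounds a disc $H\colon D^2\to Y$, triangulate $D^2$ finely so each triangle has $H$-image of diameter $<\epsilon$, transfer the vertices across $R^{-1}$ (using the points $\gamma(t_i)$ on $\partial D^2$), and reconnect by short paths in $X$; each triangle maps to a loop of diameter $<\tfrac32(\epsilon+\eta)<3D$, hence nulhomotopic, so the map on the $1$-skeleton extends over $D^2$. Its restriction to $\partial D^2$ is homotopic (freely, hence based after conjugating by a fixed path) to $\gamma$, so $[\gamma]=1$. The same argument shows $\Psi_\ast$ is injective.

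It remains to see that $\Phi_\ast$ is onto; equivalently — since $\Psi_\ast\Phi_\ast$ is then an injective endomorphism of $\pi_1(X)$, and once it equals $\mathrm{id}$ we may conclude that $\Psi_\ast$, being injective and surjective, is an isomorphism, whence so is $\Phi_\ast$ — that $\Psi_\ast\circ\Phi_\ast=\mathrm{id}_{\pi_1(X)}$. This is the main obstacle. With compatible choices of transfer points one can arrange that $\Psi(\Phi(\gamma))$ passes through the same sample points $\gamma(t_i)$ as $\gamma$, reducing the problem to homotoping, rel endpoints, the portion of $\Psi(\Phi(\gamma))$ from $\gamma(t_i)$ to $\gamma(t_{i+1})$ — which is the $R^{-1}$-transfer of the short path $\sigma_i$ — onto $\gamma|_{[t_i,t_{i+1}]}$. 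The essential difficulty is that a transferred path may be far longer than the path it came from, since transfer can enlarge length by up to the distortion at every subdivision step; hence this homotopy cannot be a single small bigon and must instead be assembled from many small cells. The plan is to obtain these cells by transporting finely triangulated discs back and forth across $R$ and $R^{-1}$ — discs filling small auxiliary loops provided by the hypothesis, loops whose diameter one bounds by about $2(\epsilon+\eta)<4D$ — and checking that every transported triangle is again a loop of diameter $<4D$. Executing this while bookkeeping basepoints and consistently chosen transfer points and connecting paths across all of the fillings is, I expect, the most delicate and technical part of the proof, and it is here that the value $4D$ (rather than anything smaller) is forced, through the same estimate $2(\epsilon+\eta)<4D$ that already appeared in the homotopy-invariance of $\Phi_\ast$.
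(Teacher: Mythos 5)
Your construction of $\Phi$ and $\Psi$ (sample, transfer across the correspondence, connect by short paths) and your grid argument for homotopy-invariance are sound and parallel the paper's. But the proof as written has a genuine gap at exactly the point you flag: the statement $\Psi_\ast\circ\Phi_\ast=\mathrm{id}$ is never proved, only announced as a ``plan'' involving transporting triangulated discs back and forth across $R$ and $R^{-1}$, with the admission that the bookkeeping is left undone. Since two groups can inject into each other without being isomorphic, your injectivity arguments alone do not yield the theorem; everything rests on this missing step, so the proof is incomplete.

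The irony is that you already possess all the tools to close the gap, and the ``essential difficulty'' you describe (that a transferred path may be much longer than its source) is a red herring. The key observation, which is how the paper organizes the whole argument, is a symmetric characterization of the maps: $\Phi([\alpha])=[\beta]$ for \emph{any} loop $\beta$ that is pointwise uniformly close to $\alpha$ (within $2D$ in an ambient space realizing Hausdorff distance $<D$, or within roughly $\eta+O(\epsilon)$ across your correspondence), well-definedness being proved in this strong form. Once that is done, $\Psi_\ast\circ\Phi_\ast=\mathrm{id}$ is immediate because the closeness relation is symmetric: $\alpha$ itself witnesses $\Psi([\beta])=[\alpha]$. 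Concretely in your setup, $\Psi(\Phi(\gamma))(t)$ lies within about $2\eta+O(\epsilon)<4D$ of $\gamma(t)$ for every $t$, and the lemma you need is: \emph{if all loops of diameter $<C$ are nulhomotopic, then any two loops that are pointwise within $C$ of each other are path-homotopic} (subdivide both loops into arcs of diameter $<\delta$, join corresponding subdivision points by paths of length $<C-2\delta$, and observe each resulting quadrilateral has diameter $<C$ regardless of how long the two loops are). This is precisely the degenerate case of your own grid argument, applied to the pair $\bigl(\gamma,\Psi(\Phi(\gamma))\bigr)$ with $C=4D$; no transport of fillings is required. I recommend extracting that lemma explicitly, restating well-definedness in the symmetric ``any uniformly close loop represents the image class'' form, and then deleting both the disc-transport plan and the now-redundant injectivity paragraphs.
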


\begin{cor}\label{BigTheorem}
Any simply connected length space X satisfies $d_{\textup{GH}}( X,\mathbb{S}^1)\geq\frac{\pi}{4}$.
\end{cor}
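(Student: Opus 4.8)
The plan is to obtain \Cref{BigTheorem} as a short contrapositive consequence of \Cref{34rf43ewewwe}. Suppose, for contradiction, that $X$ is a simply connected length space with $d_{\textup{GH}}(X,\mathbb{S}^1)<\frac{\pi}{4}$. I would invoke \Cref{34rf43ewewwe} with $Y=\mathbb{S}^1$ (which is a geodesic, hence length, space) and $D=\frac{\pi}{4}$, so that $4D=\pi$. The inequality $d_{\textup{GH}}(X,Y)<D$ holds by hypothesis, so it only remains to check that every loop of diameter $<\pi$ is nulhomotopic in $X$ and in $\mathbb{S}^1$.

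In $X$ this is immediate from simple connectedness. In $\mathbb{S}^1$ it follows from a standard degree argument: a continuous loop $\gamma\colon S^1\to\mathbb{S}^1$ that is not surjective takes values in $\mathbb{S}^1$ minus a point, which is contractible, and is therefore nulhomotopic; equivalently, any loop of nonzero winding number is surjective. Since the diameter of $\mathbb{S}^1$ in its geodesic metric is $\pi$, a surjective loop has image of diameter exactly $\pi$. Hence a loop whose image has diameter $<\pi$ has winding number $0$ and is nulhomotopic in $\mathbb{S}^1$. \Cref{34rf43ewewwe} now yields $\pi_1(X)\cong\pi_1(\mathbb{S}^1)\cong\mathbb{Z}$, contradicting the assumption that $X$ is simply connected. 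This proves $d_{\textup{GH}}(X,\mathbb{S}^1)\geq\frac{\pi}{4}$; combined with \Cref{Counterexample}, the bound is optimal.

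I do not expect a genuine obstacle here, since all the substance is carried by \Cref{34rf43ewewwe}. The only points needing a moment's care are: (i) confirming that ``diameter of a loop'' in the statement of \Cref{34rf43ewewwe} means the diameter of its image, which is exactly what makes the surjectivity/degree argument applicable; and (ii) matching the strict inequality in the hypothesis $d_{\textup{GH}}(X,Y)<D$ with the strict inequality assumed for contradiction, so that the non-strict conclusion $d_{\textup{GH}}(X,\mathbb{S}^1)\geq\frac{\pi}{4}$ comes out correctly (recall that \Cref{Counterexample} shows the value $\frac{\pi}{4}$ is genuinely attained, so one cannot hope for a strict bound).
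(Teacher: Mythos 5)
Your proposal is correct and follows essentially the same route as the paper: apply \Cref{34rf43ewewwe} with $Y=\mathbb{S}^1$ and a constant $D\leq\frac{\pi}{4}$ exceeding the (assumed small) GH distance, using that every loop of diameter $<\pi$ in $\mathbb{S}^1$ misses a point and is therefore nulhomotopic, to contradict $\pi_1(\mathbb{S}^1)\neq 0$. The only cosmetic difference is that you take $D=\frac{\pi}{4}$ exactly while the paper takes $D$ just below $\frac{\pi}{4}$; both satisfy the hypotheses of \Cref{34rf43ewewwe}, and your explicit degree argument merely fills in a detail the paper asserts without proof.
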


To prove \Cref{BigTheorem} from \Cref{34rf43ewewwe} note that every loop $\alpha:[0,1]\to\mathbb{S}^1$ with diameter $<\pi$ is nulhomotopic. So if there was a simply connected space $X$ with $d_{\textup{GH}}(X,\mathbb{S}^1)<\frac{\pi}{4}$, then we could choose the constant $D$ from \Cref{34rf43ewewwe} to be just below $\frac{\pi}{4}$, which would give a contradiction as $\pi_1(\mathbb{S}^1)\neq0$.\\

\Cref{BigSection} is almost entirely devoted to the proof of \Cref{34rf43ewewwe}. At the end of the section we also give a lower bound for the distances from simply connected length spaces to $\mathbb{S}^1$ with the metric inherited from $\mathbb{R}^2$.

\begin{remark}
As was well pointed out to me by F. Mémoli, the arguments used in \Cref{BigSection} are similar to those in Petersen's article \cite{Pe}. The Theorem in \cite[Section 4]{Pe} claims that, under adequate conditions, if the GH distance between two metric spaces $X,Y$ is small enough, then they are homotopy equivalent; in our case, as we are comparing simply connected spaces with $\mathbb{S}^1$, it will be enough to give conditions on $d_{\textup{GH}}(X,Y)$ such that $\pi_1(X)\cong\pi_1(Y)$.
This allows us to give better bounds for $d_{\textup{GH}}(X,Y)$ for our purposes; for example, in the Main Lemma of \cite[Section 2]{Pe}, if one sets $n=1$ and if $X$ is a length space instead of a LGC$^0(\rho)$ space, then one can improve Petersen's conclusion $\overline{f}(\Delta)\subseteq B(f(v),\rho_1(\varepsilon))$ (with $\rho_1(\varepsilon)\geq2\varepsilon$) to the sharper condition $\overline{f}(\Delta)\subseteq B(f(v),\varepsilon)$.
\end{remark}

\Cref{EO} is devoted to the construction of a simply connected geodesic space E with $d_{\textup{GH}}(\mathrm{E},\mathbb{S}^1)\leq\frac{\pi}{4}$. This, in conjunction with \Cref{BigTheorem}, proves
\Cref{Counterexample}. In \Cref{EO} we will not use the definition of Gromov-Hausdorff distance given above; we now recall (cf. \S7.3 in \cite{BBI}) an equivalent definition based on correspondences between sets, which will be more convenient for our purposes. 

Given two sets $X$ and $Y$, we say a relation $R\subseteq  X\times Y$ is a \textit{correspondence} between $ X$ and $Y$ if $\pi_{ X}(R)= X$ and $\pi_{Y}(R)=Y$, where $\pi_{ X}: X\times Y\to  X$ and $\pi_{Y}: X\times Y\to Y$ are the coordinate projections. If $( X,d_{ X})$ and $(Y,d_{Y})$ are metric spaces, we define the distortion of a nonempty relation $R\subseteq  X\times Y$ as 
\begin{equation}
\text{dis}(R):=\sup\{|d_{ X}(x,x')-d_{Y}(y,y')|;(x,y),(x',y')\in R\}\in[0,\infty].
\end{equation}
In \cite[Theorem 7.3.25]{BBI} it is proved that, if $( X,d_{X}),(Y,d_{Y})$ are nonempty metric spaces, then
\begin{equation}\label{distortandGH}
d_{\textup{GH}}( X,Y)=\frac{1}{2}\inf\{\text{dis}(R);R\subseteq  X\times Y\text{ correspondence between $ X$ and }Y\}.
\end{equation}

In \Cref{UniquenessSection} we adress the question of whether the space $\mathrm{E}$ we construct to prove \Cref{Counterexample}, which is a compact $\mathbb{R}$-tree, is the unique space satisfying \Cref{Counterexample}. It turns out that it is not hard to use E to construct a big family of spaces that satisfy \Cref{Counterexample} (see \Cref{453retrgfgdff}), and that there are $\mathbb{R}$-trees which satisfy \Cref{Counterexample} and are not isometric to E (see \Cref{Note3efrdfd}). 
However, in \Cref{34we23ioepqwpw9} we prove a minimality result for our space E: it is, up to isometry, the only complete $\mathbb{R}$-tree of length at most $\frac{5\pi}{4}$ which satisfies \Cref{Counterexample}.\\

\textbf{Acknowledgements. } Special thanks to Facundo Mémoli for introducing the author to the topic of Gromov-Hausdorff distances and providing guidance while writing this article. The author gratefully acknowledges support from the grants BSF 2020124 and NSF CCF AF 2310412.
\section{Proof of \Cref{34rf43ewewwe}}\label{BigSection}

\begin{lemma}\label{465tgfdewew}
Let $(Y,d_Y)$ be a length space such that, for some constant $C>0$, all loops of diameter $<C$ are contractible in $Y$, and let $y_0\in Y$. Then any two loops $\beta,\beta':[0,1]\to Y$ based at $y_0$ satisfying $d_Y(\beta(t),\beta'(t))<C$ for all $t\in[0,1]$ are path-homotopic.
\end{lemma}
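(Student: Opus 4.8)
The plan is to build the path-homotopy from $\beta$ to $\beta'$ by subdividing the interval $[0,1]$ finely enough that on each subinterval the two paths stay close, and then to glue together small loops that are nulhomotopic by hypothesis. Concretely, since $[0,1]$ is compact and the function $t\mapsto d_Y(\beta(t),\beta'(t))$ is continuous with values in $[0,C)$, it attains a maximum $C' < C$. Using that $\beta$ and $\beta'$ are (uniformly) continuous, choose a partition $0 = t_0 < t_1 < \cdots < t_n = 1$ so fine that the diameter of $\beta([t_{i-1},t_i])$ and of $\beta'([t_{i-1},t_i])$ are each less than $(C - C')/2$. For each partition point $t_i$ pick a path $\sigma_i$ from $\beta(t_i)$ to $\beta'(t_i)$; since $Y$ is a length space we may take $\sigma_i$ to have length less than $d_Y(\beta(t_i),\beta'(t_i)) + \delta$ for a small $\delta$, hence of diameter less than $C' + \delta$, which we arrange to be $< C$. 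At the endpoints $t_0 = 0$ and $t_n = 1$ we have $\beta(t_i) = \beta'(t_i) = y_0$, so we take $\sigma_0$ and $\sigma_n$ to be the constant path at $y_0$.

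Next I would form, for each $i \in \{1,\dots,n\}$, the loop $\ell_i$ obtained by concatenating $\beta|_{[t_{i-1},t_i]}$, then $\sigma_i$, then the reverse of $\beta'|_{[t_{i-1},t_i]}$, then the reverse of $\sigma_{i-1}$. This loop is contained in the union $\beta([t_{i-1},t_i]) \cup \sigma_i \cup \beta'([t_{i-1},t_i]) \cup \sigma_{i-1}$. One checks its diameter is bounded: any two points lie within $(C-C')/2 + (C-C')/2$ of the base, plus the contributions from the $\sigma$'s; being a bit careful, the diameter is at most $\operatorname{diam}\beta([t_{i-1},t_i]) + \operatorname{diam}\beta'([t_{i-1},t_i]) + \operatorname{length}(\sigma_{i-1}) + \operatorname{length}(\sigma_i)$ is too crude — instead estimate: for a point $p$ on the loop, $d_Y(p, \beta(t_{i-1})) \le \max(\operatorname{diam}\beta([t_{i-1},t_i]),\ \operatorname{length}\sigma_{i-1},\ \operatorname{length}\sigma_i + \operatorname{diam}\beta'([t_{i-1},t_i]))$, and by choosing the partition fine enough and $\delta$ small enough relative to $C - C'$ we can guarantee every $\ell_i$ has diameter $< C$, hence is nulhomotopic (as a based loop, rebased along a path, which doesn't affect nulhomotopy).

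Finally I would assemble these pieces. Let $\beta_i$ denote the concatenation $\sigma_0^{-1} \cdot \sigma_0 \cdot$ (a "staircase" path) — more cleanly: define for $i = 0,\dots,n$ the loop $\gamma_i$ based at $y_0$ given by $\beta|_{[0,t_i]} \cdot \sigma_i \cdot (\beta'|_{[0,t_i]})^{-1}$. Then $\gamma_0$ is (path-homotopic to) the constant loop and $\gamma_n = \beta \cdot (\beta')^{-1}$. The key observation is that $\gamma_i$ is path-homotopic to $\gamma_{i-1}$: inserting a backtrack $\sigma_i^{-1}\cdot\sigma_i$ and regrouping, $\gamma_i \simeq \beta|_{[0,t_{i-1}]} \cdot (\beta|_{[t_{i-1},t_i]} \cdot \sigma_i \cdot (\beta'|_{[t_{i-1},t_i]})^{-1} \cdot \sigma_{i-1}^{-1}) \cdot \sigma_{i-1} \cdot (\beta'|_{[0,t_{i-1}]})^{-1}$, and the middle parenthesized loop is exactly $\ell_i$ conjugated into position, which is nulhomotopic. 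Chaining these homotopies gives $\beta \cdot (\beta')^{-1} \simeq \text{const}$, i.e. $\beta \simeq \beta'$ rel endpoints. The main obstacle I anticipate is the bookkeeping in the diameter estimate for the loops $\ell_i$: one has to choose the partition mesh and the slack $\delta$ in the length of the connecting paths $\sigma_i$ in the right order so that the total diameter stays strictly below $C$, and one should double-check that the "rebasing" of a nulhomotopic free loop to a based loop preserves nulhomotopy (it does, since $\pi_1$ acts on itself and a freely nulhomotopic loop is trivial in $\pi_1$).
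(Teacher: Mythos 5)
Your proposal is correct and follows essentially the same route as the paper's proof: subdivide $[0,1]$ finely, join $\beta(t_i)$ to $\beta'(t_i)$ by short paths using the length-space hypothesis, observe that the resulting small quadrilateral loops have diameter $<C$ and are therefore nulhomotopic, and chain the staircase of intermediate loops together (your $\gamma_i$ differ from the paper's $p_n$ only in returning along $(\beta'|_{[0,t_i]})^{-1}$ rather than continuing along the tail of $\beta'$, which changes nothing up to homotopy). The bookkeeping you flag does close up with the choices you indicate, and matches the paper's choice of working with $C-2\delta$.
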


\begin{proof}
Let $\delta>0$ be such that $d_Y(\beta(t),\beta'(t))<C-2\delta$ for all $t\in[0,1]$. 

Let $N\in\mathbb{N}$ be big enough that for all $n=1,\dots,N$, the paths $\beta_n:=\beta|_{\left[\frac{n-1}{N},\frac{n}{N}\right]}$ and $\beta'_n:=\beta'|_{\left[\frac{n-1}{N},\frac{n}{N}\right]}$ have diameter $<\delta$. Note that $\beta=\beta_1\cdots\beta_N$ and $\beta'=\beta'_1\cdots\beta'_N$.  Also, for each $n=1,\dots,N-1$ let $\gamma_n:[0,1]\to Y$ be a path with $\gamma_n(0)=\beta\left(\frac{n}{N}\right),\gamma_n(1)=\beta'\left(\frac{n}{N}\right)$ and with length $<C-2\delta$, as in \Cref{BB'Col}.

\begin{figure}[h]
    \centering
    \includegraphics[width=0.80\linewidth]{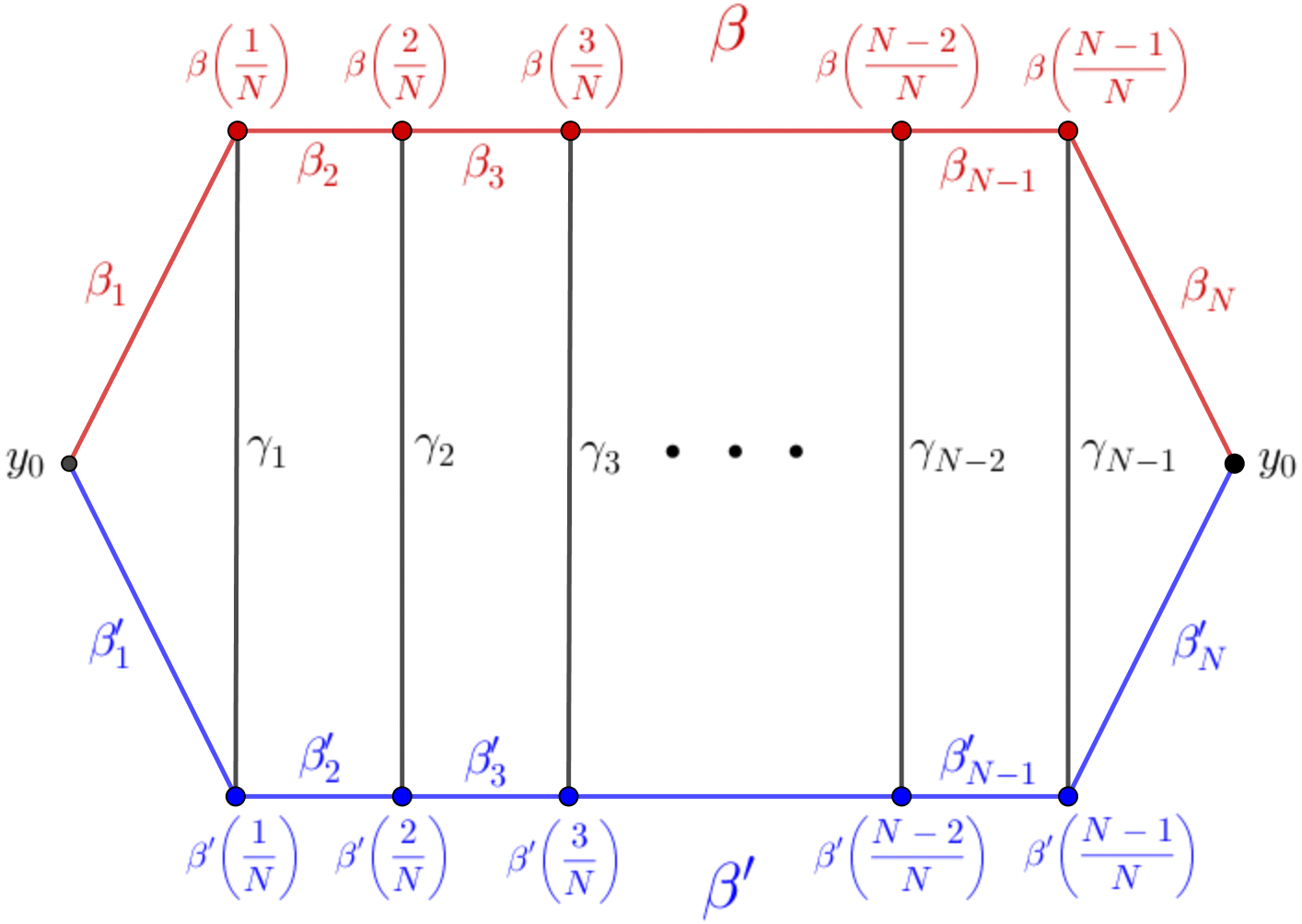}
    \caption{The loops $\beta$ and $\beta'$, with paths $\gamma_1,\dots,\gamma_N$ between them. Note that the $2$ triangles and the $N-2$ rectangles in the figure are all nulhomotopic, as they have diameter $<C$.}
    \label{BB'Col}
\end{figure}

Now consider the sequence of loops $\beta=p_0,p_1,\dots,p_N=\beta'$, where for $n=1,\dots,N-1$ we have $p_n=\beta_1\dots\beta_n\gamma_n\beta'_{n+1}\cdots\beta'_N$. Then for all $n=1,\dots,N$, $p_{n-1}$ is homotopic to $p_n$. Indeed, for $n=2,\dots,N-1$ (the cases $n=1,N$ are easier) we have
\begin{equation*}
p_np_{n-1}^{-1}=(\beta_1\cdots\beta_{n-1})
\beta_n\gamma_n(\beta'_n)^{-1}(\gamma_{n-1})^{-1}(\beta_1\cdots\beta_{n-1})^{-1},
\end{equation*}
and the loop $\beta_n\gamma_n(\beta'_n)^{-1}(\gamma_{n-1})^{-1}$ is null because it has diameter $<C$ (this follows from the facts that $\beta_n,\beta_{n-1}$ have diameter $<\delta$ and $\gamma_n,\gamma_{n-1}$ have length $<C-2\delta$). We conclude then that $\beta$ and $\beta'$ are homotopic, as we wanted.
\end{proof}

\begin{theorem}\label{45trfggff}
Let $(X,d_X),(Y,d_Y)$ be length spaces and let $D>d_{\textup{GH}}(X,Y)$. Suppose $X$ and $Y$ are embedded isometrically into a metric space $(Z,d_Z)$ with $d_{\mathrm{H}}^Z(X,Y)<D$. Let $x_0\in X,y_0\in Y$ with $d_Z(x_0,y_0)<D$. Then for any loop $\alpha:[0,1]\to X$ based at $x_0$ there is a loop $\beta:[0,1]\to Y$ based at $y_0$ such that $d_Z(\alpha(t),\beta(t))<2D$ for all $t\in[0,1]$.

Moreover, if all loops (not necessarily based at $y_0$) of diameter $<4D$ in $Y$ are contractible in $Y$, then the function $\alpha\mapsto\beta$ defined above gives a well defined, surjective\footnote{If $X$ is not a length space, then \Cref{45trfggff} still holds with the same proof, except that the map $\Phi$ need not be surjective.} map $\Phi:\pi_1(X,x_0)\to\pi_1(Y,y_0)$.
\end{theorem}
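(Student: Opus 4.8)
The plan is to prove Theorem \ref{45trfggff} in two stages matching its two paragraphs.

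\textbf{Construction of $\beta$ from $\alpha$.} Given a loop $\alpha:[0,1]\to X$ based at $x_0$, I would first choose $\varepsilon>0$ small with $d_{\mathrm{H}}^Z(X,Y)<D-\varepsilon$ and $d_Z(x_0,y_0)<D-\varepsilon$. Then I would pick a partition $0=t_0<t_1<\cdots<t_N=1$ fine enough that each arc $\alpha|_{[t_{k-1},t_k]}$ has diameter $<\varepsilon$. For each node $t_k$ ($1\le k\le N-1$) choose $y_k\in Y$ with $d_Z(\alpha(t_k),y_k)<D-\varepsilon$, using the Hausdorff condition; set $y_0=y_N$ equal to the given basepoint. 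Now because $Y$ is a length space, I can join consecutive $y_{k-1},y_k$ by a path $\sigma_k$ in $Y$ whose length is close to $d_Z(y_{k-1},y_k)$; and I estimate $d_Z(y_{k-1},y_k)\le d_Z(y_{k-1},\alpha(t_{k-1}))+d_Z(\alpha(t_{k-1}),\alpha(t_k))+d_Z(\alpha(t_k),y_k)<2D-2\varepsilon+\varepsilon<2D$, so I can arrange $\mathrm{length}(\sigma_k)<2D$. Define $\beta=\sigma_1\sigma_2\cdots\sigma_N$, reparametrized so $\beta|_{[t_{k-1},t_k]}=\sigma_k$. Then for $t\in[t_{k-1},t_k]$ the point $\beta(t)$ lies on $\sigma_k$, so $d_Z(\beta(t),y_{k-1})<2D$ and $d_Z(\alpha(t),\alpha(t_{k-1}))<\varepsilon$ and $d_Z(\alpha(t_{k-1}),y_{k-1})<D-\varepsilon$; combining the first with the others via the triangle inequality gives $d_Z(\alpha(t),\beta(t))<2D$ after a slightly more careful bookkeeping — the cleanest route is to use $\mathrm{length}(\sigma_k)<2D-c$ for a uniform $c>0$ so that $d_Z(\beta(t),y_{k-1})<2D-c$ and then absorb the remaining $\varepsilon$-type errors into $c$. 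This is the routine-calculation part; the content is just "length space + Hausdorff proximity lets you shadow a loop by a loop."

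\textbf{Well-definedness and surjectivity of $\Phi$.} Here the hypothesis that all loops of diameter $<4D$ in $Y$ are contractible enters, and this is where Lemma \ref{465tgfdewew} does the work. For well-definedness I must show: (i) the path-homotopy class of $\beta$ does not depend on the partition, the chosen points $y_k$, or the chosen paths $\sigma_k$; and (ii) if $\alpha\simeq\alpha'$ rel endpoints, then the associated $\beta\simeq\beta'$. For (i), any two admissible choices $\beta,\beta'$ for the \emph{same} $\alpha$ satisfy $d_Z(\beta(t),\alpha(t))<2D$ and $d_Z(\beta'(t),\alpha(t))<2D$, hence $d_Y(\beta(t),\beta'(t))=d_Z(\beta(t),\beta'(t))<4D$ for all $t$ (using that $Y$ is isometrically embedded), so Lemma \ref{465tgfdewew} with $C=4D$ gives $\beta\simeq\beta'$ rel basepoint — after first passing to a common refinement of the two partitions so that both arise from partition data, and checking that refining a partition changes $\beta$ only within its homotopy class (again by Lemma \ref{465tgfdewew}, since a refined $\beta$ stays $2D$-close to $\alpha$). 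For (ii), given a path-homotopy $H:[0,1]^2\to X$ from $\alpha$ to $\alpha'$, I would subdivide the square finely, apply the construction of the first part along a grid to get intermediate loops in $Y$ that are pairwise $<4D$-close (in $Z$, hence in $Y$), and chain Lemma \ref{465tgfdewew} across the grid. For surjectivity: given a loop $\beta$ in $Y$ based at $y_0$, run the symmetric construction (swapping the roles of $X$ and $Y$) to get a loop $\alpha$ in $X$ based at $x_0$ with $d_Z(\alpha(t),\beta(t))<2D$; then $\Phi([\alpha])$ is represented by a loop $2D$-close to $\alpha$, hence $4D$-close to $\beta$, hence path-homotopic to $\beta$ by Lemma \ref{465tgfdewew} — so $\Phi([\alpha])=[\beta]$. (The footnote's point that surjectivity needs $X$ a length space is exactly that this symmetric construction requires $X$ to have enough paths.)

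\textbf{Main obstacle.} I expect the genuine difficulty to be (ii), the independence of $\Phi$ under homotopies of $\alpha$: one has to upgrade the one-dimensional "shadowing lemma" argument to a two-dimensional grid argument and verify that all the small loops appearing as boundaries of grid cells — built from short $\alpha$-arcs, short connecting paths $\sigma$, and the vertical transition paths — genuinely have diameter $<4D$ so that Lemma \ref{465tgfdewew} applies cell by cell. Everything else (the basic construction, well-definedness under choices, surjectivity) reduces fairly mechanically to the first-paragraph construction plus a single application of Lemma \ref{465tgfdewew} with $C=4D$.
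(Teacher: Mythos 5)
Your outline follows the same route as the paper (discretize $\alpha$, push the nodes over to $Y$, join consecutive images by short paths; then discretize the homotopy parameter and chain \Cref{465tgfdewew}), but the first part as written has a genuine gap in the distance estimate, and the repair you propose does not close it. From $d_Z(\beta(t),y_{k-1})<\mathrm{length}(\sigma_k)<2D$, $d_Z(y_{k-1},\alpha(t_{k-1}))<D-\varepsilon$ and $d_Z(\alpha(t_{k-1}),\alpha(t))<\varepsilon$, the triangle inequality only gives $d_Z(\alpha(t),\beta(t))<3D$. Shaving the path lengths to $2D-c$ cannot absorb this: you would need $c>D$, whereas $d_Z(y_{k-1},y_k)$ can genuinely be as large as $2D-2\varepsilon$ (the two projected points may land far apart in $Y$ even when $\alpha(t_{k-1}),\alpha(t_k)$ are close), so $\mathrm{length}(\sigma_k)$ must be allowed to approach $2D$ and the surplus you need to absorb is of size $D$, not of size $\varepsilon$. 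The missing one-line idea, which is exactly what the paper uses, is this: since $\sigma_k$ has length $<2D$, every point of $\sigma_k$ is at distance $<D$ from at least one of its two endpoints $y_{k-1},y_k$ (the two partial lengths sum to $<2D$, so the smaller one is $<D$); and each endpoint $y_m$ is at distance $<(D-\varepsilon)+\varepsilon=D$ from \emph{every} point of $\alpha|_{[t_{k-1},t_k]}$, because $d_Z(y_m,\alpha(t_m))<D-\varepsilon$ and that arc has diameter $<\varepsilon$. Combining these gives $d_Z(\beta(t),\alpha(t))<2D$ for all $t$ in the subinterval, whichever endpoint happens to be the near one.

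The rest of your plan matches the paper, and the step you single out as the main obstacle is in fact the easy one. For the homotopy-invariance (your (ii)) the paper discretizes only the homotopy parameter $s$: it applies the first part to each intermediate loop $\alpha_{n/N}$ to produce a loop $\beta_{n/N}$ based at $y_0$ with $d_Z(\alpha_{n/N}(t),\beta_{n/N}(t))<2(D-\delta)$, and then $d_Z(\beta_{(n-1)/N}(t),\beta_{n/N}(t))\leq 4(D-\delta)+\delta<4D$ pointwise, so \Cref{465tgfdewew} with $C=4D$ applies directly to each consecutive pair; all the cell-by-cell diameter checking you anticipate is already packaged inside that lemma, and there is no two-dimensional grid to build. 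Your (i) likewise collapses to a single application of the lemma: $\Phi$ is defined on \emph{any} loop $\beta$ that is pointwise $2D$-close to $\alpha$, independently of partitions, so two admissible choices are pointwise $<4D$ apart and homotopic, with no common-refinement argument needed. Your surjectivity argument is the paper's.
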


\begin{proof}
Let $\delta>0$ be such that $d_{\text{H}}^Z(X,Y)<D-\delta$. 
Let $N\in\mathbb{N}$ be big enough that diam$\left(\alpha|_{\left[\frac{n-1}{N},\frac{n}{N}\right]}\right)<\delta$ for all $n=1,\dots,N$, and for each $n=0,\dots,N$ 
define $\beta\left(\frac{n}{N}\right)$ to be a point of $Y$ such that $d_Z\left(\alpha\left(\frac{n}{N}\right),\beta\left(\frac{n}{N}\right)\right)<D-\delta$ (one can choose $\beta(0)=\beta(1)=y_0$). By the triangular inequality, $d_Y\left(\beta\left(\frac{n-1}{N}\right),\beta\left(\frac{n}{N}\right)\right)<2(D-\delta)+\delta<2D$,
thus we can define $\beta|_{\left[\frac{n-1}{N},\frac{n}{N}\right]}$ to be a path from $\beta\left(\frac{n-1}{N}\right)$ to $\beta\left(\frac{n}{N}\right)$ of length $<2D$.

Note that all points in $\beta|_{\left[\frac{n-1}{N},\frac{n}{N}\right]}$ are at distance $<D$ of either $\beta\left(\frac{n-1}{N}\right)$ or $\beta\left(\frac{n}{N}\right)$. Moreover, $\beta\left(\frac{n-1}{N}\right)$ and $\beta\left(\frac{n}{N}\right)$ are both at distance $<D$ from all points in $\alpha|_{\left[\frac{n-1}{N},\frac{n}{N}\right]}$. Thus, all points in $\beta|_{\left[\frac{n-1}{N},\frac{n}{N}\right]}$ are at distance $<D+(D-\delta)$ from all points in $\alpha|_{\left[\frac{n-1}{N},\frac{n}{N}\right]}$, and in particular $d_Z(\beta(t),\alpha(t))<2D$ for all $t\in[0,1]$, as we wanted. This concludes the proof of the first part of the theorem.\\

Suppose now that all loops of diameter $<4D$ are contractible in $Y$. Suppose that $\alpha,\alpha':[0,1]\to X$ are path-homotopic loops (the path-homotopy being inside $X$) based at $x_0$, and let $\beta,\beta':[0,1]\to Y$ be loops based at $y_0$ such that $d_Z(\alpha(t),\beta(t))<2D$ and $d_Z(\alpha'(t),\beta'(t))<2D$ for all $t\in[0,1]$. We want to prove that $\beta,\beta'$ are homotopic.

To do it, let $\delta>0$ be such that $d_{\text{H}}^Z(X,Y)<D-\delta$ and such that $d_Z(\alpha(t),\beta(t))<2D-\delta$ and $d_Z(\alpha'(t),\beta'(t))<2D-\delta$ for all $t\in[0,1]$. Let $(\alpha_t)_{t\in[0,1]}$ be a path-homotopy in $X$ from $\alpha_0=\alpha$ to $\alpha_1=\alpha'$. 
By uniform continuity of the homotopy, we can choose big enough $N$ such that for all $n=1,\dots,N$ and for all $t\in[0,1]$ we have $d_X\left(\alpha_{\frac{n-1}{N}}(t),\alpha_{\frac{n}{N}}(t)\right)<\delta$. 
Now by the first part of the theorem, we can let $\beta_{\frac{n}{N}}$ be a loop in $Y$ based at $y_0$ such that $d_Z(\alpha_{\frac{n}{N}}(t),\beta_{\frac{n}{N}}(t))<2(D-\delta)$ for all $t\in[0,1]$ (with $\beta_0=\beta$ and $\beta_1=\beta'$). 
By the triangular inequality we have for all $n=1,\dots,N$ and $t\in[0,1]$ that $d_Z\left(\beta_{\frac{n-1}{N}}(t),\beta_{\frac{n}{N}}(t)\right)\leq4(D-\delta)+\delta<4D$, thus by \Cref{465tgfdewew}, $\beta_{\frac{n-1}{N}}$ and $\beta_{\frac{n}{N}}$ are path-homotopic. We conclude then that $\beta_0=\beta$ and $\beta_1=\beta'$ are path-homotopic. This proves that the map $\Phi:\pi_1(X,x_0)\to\pi_1(Y,y_0)$ is well defined.

To check that $\Phi$ is surjective note that, by the first part of the theorem, for any loop $\beta:[0,1]\to Y$ based at $y_0$ we can find a loop $\alpha:[0,1]\to X$ based at $x_0$ such that $d_Z(\beta(t),\alpha(t))<2D$ for all $t$, so $\Phi([\alpha])=[\beta]$.
\end{proof}

\begin{proof}[Proof of \Cref{34rf43ewewwe}]
Let $(Z,d_Z)$ contain $X,Y$ isometrically and choose basepoints $x_0,y_0$ in $X,Y$ respectively with $d_Z(x_0,y_0)<D$. Define maps $\Phi:\pi_1(X,x_0)\to\pi_1(Y,y_0)$ and $\Psi:\pi_1(Y,y_0)\to\pi_1(X,x_0)$ as in \Cref{45trfggff}. Then the composition $\Psi\circ\Phi$ is the identity (similarly with $\Phi\circ\Psi$): indeed, for any loop $\alpha:[0,1]\to X$ based at $x_0$, $\Phi([\alpha])$ will be $[\beta]$ for some loop $\beta:[0,1]\to Y$ based at $y_0$ such that $d_Z(\alpha(t),\beta(t))<2D$ for all $t\in[0,1]$. But then we also have $\Psi([\beta])=[\alpha]$, so $\Psi\circ\Phi([\alpha])=[\alpha]$.
\end{proof}

\begin{note}
One can use the ideas from this section to find lower bounds for the distance from any simply connected length space to $\mathbb{S}^1$ with the Euclidean metric $d_{\mathbb{R}^2}$ inherited from $\mathbb{R}^2$. 
In $(\mathbb{S}^1,d_{\mathbb{R}^2})$, the conclusion of \Cref{465tgfdewew} holds for $C=2$, that is, any two loops $\beta_0,\beta_1:[0,1]\to\mathbb{S}^1$ with $d_{\mathbb{R}^2}(\beta_0(t),\beta_1(t))<2$ for all $t$ are path-homotopic, with a homotopy $(\beta_s)_{s\in[0,1]}$ between them defined by, for each $t$, letting $s\mapsto\beta_s(t)$ be the shortest geodesic from $\beta_0(t)$ to $\beta_1(t)$. 
Also note that for any $D>0$ and any two points $x,y\in\mathbb{S}^1$ with $d_{\mathbb{R}^2}(x,y)\leq2D$, the points in the shortest path from $x$ to $y$ will not be at distance $>\sqrt{2-2\sqrt{1-D^2}}$ from the set $\{x,y\}$. That and the proof of \Cref{45trfggff} imply that, if we have a length space $X$ at Hausdorff distance $<D$ from $\mathbb{S}^1$ inside some bigger metric space $(Z,d_Z)$, then for each loop $\alpha:[0,1]\to X$ we can find a loop $\beta:[0,1]\to\mathbb{S}^1$ such that for all $t$, $d_Z(\alpha(t),\beta(t))<D+\sqrt{2-2\sqrt{1-D^2}}$. 
If $2\left(D+\sqrt{2-2\sqrt{1-D^2}}\right)<C=2$, as in \Cref{45trfggff} we obtain a well defined surjective map $\pi_1(X)\to\pi_1\left(\mathbb{S}^1\right)$, so $X$ cannot be simply connected.

This proves that the GH distance $D$ from $(\mathbb{S}^1,d_{\mathbb{R}^2})$ to a simply connected length space cannot satisfy $D+\sqrt{2-2\sqrt{1-D^2}}<1$. Thus we obtain a lower bound for $D$ of approximately $0.49165$. We could not construct an optimal example similar to the one in \Cref{EO} and we currently see no reason why this lower bound should be optimal, which naturally leads to the following question:
\end{note}


\begin{question}
What is the infimal distance from a simply connected geodesic space to $(\mathbb{S}^1,d_{\mathbb{R}^2})$?
\end{question}

\section{A simply connected geodesic space $\mathrm{E}$ with\\ d$_{\textup{GH}}(\mathrm{E},\mathbb{S}^1)=\frac{\pi}{4}$}\label{EO}

\Cref{ES1} shows two metric graphs (see \cite[Definition 3.2.9.]{BBI} for a definition of metric graph) which we will call $\textup{E}$ and $\mathbb{S}^1$ for obvious reasons, with all edges having length $\frac{\pi}{4}$. The graph E has five edges, $a,b,c,d$ and $e$. We have also labelled the eight edges of $\mathbb{S}^1$ with the letters $a,b,c,d,e$. 

We define a function $\Phi:\mathbb{S}^1\to \textup{E}$ which sends each of the $8$ edges of $\mathbb{S}^1$ isometrically to one of the $5$ segments $a,b,c,d,e$ of $\textup{E}$ as indicated by the labels and orientations in the figure. This specification of $\Phi$ gives two possible values $P,Q\in\textup{E}$ to $\Phi(1)$; for definiteness we choose $\Phi(1)=P$. Note that $\Phi$ is locally $1$-Lipschitz except at $1$, where there is a jump of length $\frac{\pi}{2}$ from $Q$ to $P$.
\begin{figure}[h]
    \centering
    \includegraphics[width=0.6\linewidth]{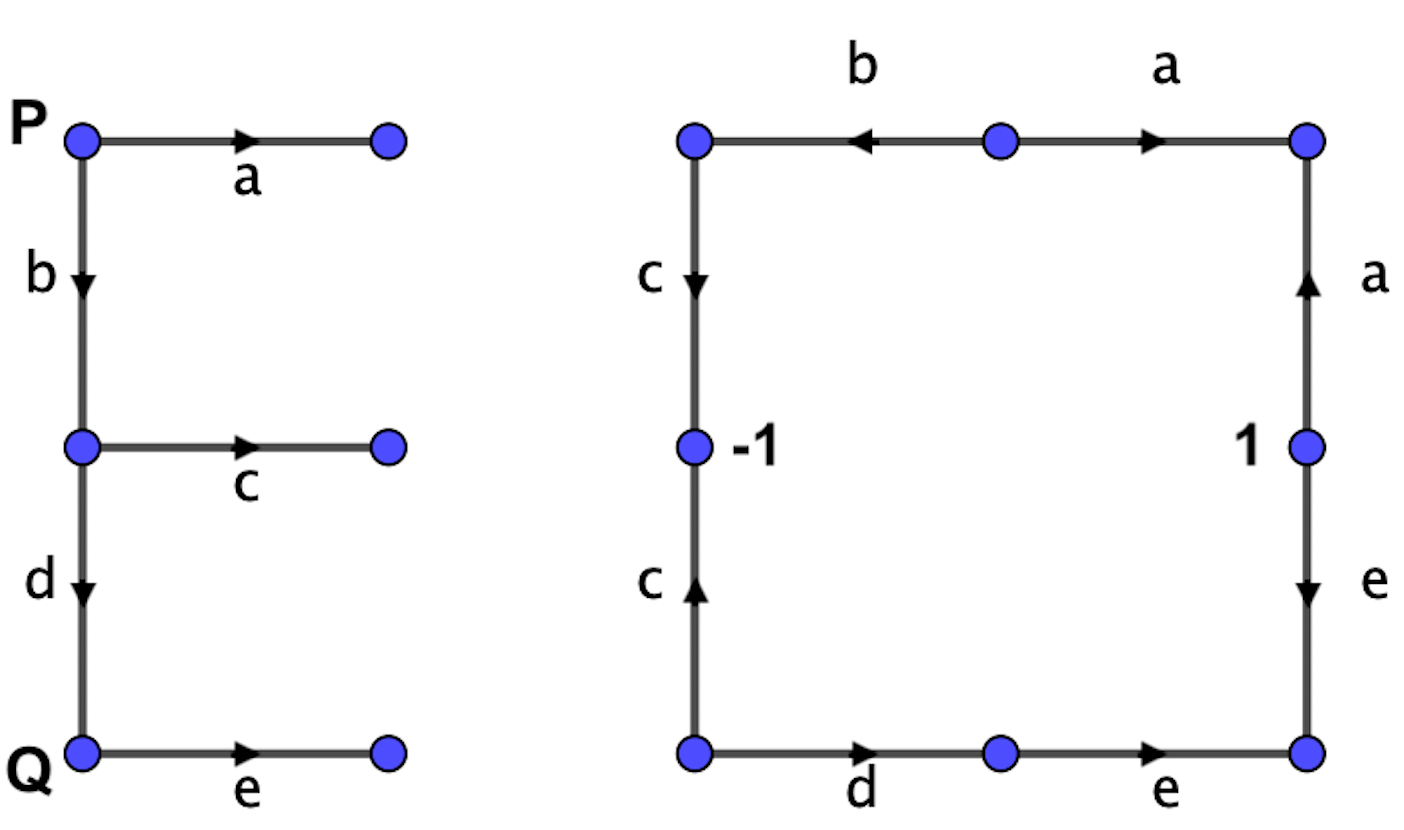}
    \caption{The spaces \textup{E} and $\mathbb{S}^1$.}
    \label{ES1}
\end{figure}

\begin{claim}\label{354ref34r}
The graph of $\Phi$, $R:=\{(x,\Phi(x));x\in\mathbb{S}^1\}\subseteq\mathbb{S}^1\times\textup{E}$, is a correspondence between $\mathbb{S}^1$ and $\textup{E}$ with $\text{dis}(R)\leq\frac{\pi}{2}$.
\end{claim}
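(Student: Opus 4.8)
The plan is to verify the two defining properties of a correspondence (surjectivity of both coordinate projections) and then bound the distortion. Surjectivity of $\pi_{\mathbb{S}^1}$ is immediate since $R$ is the graph of a function defined on all of $\mathbb{S}^1$. Surjectivity of $\pi_{\textup{E}}$ amounts to checking that $\Phi$ is onto: the figure is set up precisely so that the eight edges of $\mathbb{S}^1$, sent isometrically according to the labels, cover each of the five edges $a,b,c,d,e$ of $\textup{E}$ at least once, so $\Phi(\mathbb{S}^1)=\textup{E}$ and $R$ is a correspondence.

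For the distortion bound I would estimate $|d_{\mathbb{S}^1}(x,x')-d_{\textup{E}}(\Phi(x),\Phi(x'))|$ for arbitrary $x,x'\in\mathbb{S}^1$. First I would record that $\Phi$ is $1$-Lipschitz away from the jump at $1$; more precisely, for any $x,x'$ lying on a common arc avoiding the jump point, $d_{\textup{E}}(\Phi(x),\Phi(x'))\le d_{\mathbb{S}^1}(x,x')$ because $\Phi$ is an isometry on each edge and edges are glued consistently except at $1$. To handle the jump, the key observation is that $\Phi$ has a discontinuity of size $\tfrac{\pi}{2}=d_{\textup{E}}(P,Q)$ at the single point $1$, so going once around the jump changes $\Phi$-values by at most $\tfrac{\pi}{2}$; combined with the $1$-Lipschitz behaviour elsewhere this should give $d_{\textup{E}}(\Phi(x),\Phi(x'))\le d_{\mathbb{S}^1}(x,x')+\tfrac{\pi}{2}$, and since distances in $\textup{E}$ of diameter-related pairs are also controlled, I would get $d_{\mathbb{S}^1}(x,x')-d_{\textup{E}}(\Phi(x),\Phi(x'))\le\tfrac{\pi}{2}$ as well. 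The cleanest route is probably: pick for each of $x,x'$ a lift/representative parametrising $\mathbb{S}^1$ by arclength in $[0,2\pi)$, split the short arc between them into at most the two pieces separated by $1$, apply the edgewise isometry on each piece, and use the triangle inequality in $\textup{E}$, absorbing the single possible $P$–$Q$ jump of length $\tfrac{\pi}{2}$.

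The main obstacle is the lower direction, i.e. showing $d_{\mathbb{S}^1}(x,x')-d_{\textup{E}}(\Phi(x),\Phi(x'))\le\tfrac{\pi}{2}$ is not enough by itself — one must also rule out $d_{\textup{E}}(\Phi(x),\Phi(x'))$ being much larger than $d_{\mathbb{S}^1}(x,x')$. Since $\mathbb{S}^1$ has diameter $\pi$ (it has total length $2\pi$) while $\textup{E}$ is an $\mathbb{R}$-tree of total length $\tfrac{5\pi}{4}$, no distance in $\textup{E}$ exceeds $\tfrac{5\pi}{4}$, but that crude bound gives only $\tfrac{5\pi}{4}$, too weak; so I would instead argue geometrically on the tree $\textup{E}$, noting that $\Phi$ maps the two complementary arcs of $1$ in $\mathbb{S}^1$ onto paths in $\textup{E}$ and that any two image points are joined in $\textup{E}$ by a path which is the concatenation of sub-paths of these two images, whose total $\textup{E}$-length is at most $d_{\mathbb{S}^1}(x,x')+\tfrac{\pi}{2}$ after cancellation in the tree. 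I expect this cancellation argument — tracking how back-and-forth motion along the edges $a,b,c,d,e$ collapses in the tree metric — to be the technically fiddly step, but a careful case analysis over which edges contain $\Phi(x)$ and $\Phi(x')$ (there are only finitely many combinations) finishes it, yielding $\mathrm{dis}(R)\le\tfrac{\pi}{2}$.
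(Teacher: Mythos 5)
Your treatment of the correspondence property and of the inequality $d_{\textup{E}}(\Phi(x),\Phi(x'))\leq d_{\mathbb{S}^1}(x,x')+\frac{\pi}{2}$ is fine and matches the paper: surjectivity of $\Phi$ gives the correspondence, and following the short arc from $x$ to $x'$, using that $\Phi$ is a local isometry away from $1$ and absorbing the single $P$--$Q$ jump of length $\frac{\pi}{2}$, bounds $d_{\textup{E}}(\Phi(x),\Phi(x'))$ from above. The gap is in the opposite inequality, $d_{\mathbb{S}^1}(x,x')-d_{\textup{E}}(\Phi(x),\Phi(x'))\leq\frac{\pi}{2}$, which requires a \emph{lower} bound on $d_{\textup{E}}(\Phi(x),\Phi(x'))$: since $\Phi$ collapses a circle of length $2\pi$ onto a tree of total length $\frac{5\pi}{4}$, the danger is that two points far apart in $\mathbb{S}^1$ (e.g.\ antipodal, at distance $\pi$) land too close together in $\textup{E}$. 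Your proposed mechanism for this direction --- exhibiting a connecting path in $\textup{E}$ made of sub-paths of the two image arcs and controlling its length after cancellation --- again only bounds $d_{\textup{E}}$ from above, so it proves the inequality you already had, not the one you need. The remaining appeal to ``a careful case analysis over which edges contain $\Phi(x)$ and $\Phi(x')$'' is in principle a valid (if laborious) finite verification, but you neither carry it out nor identify any structural reason the inequality should survive it, so as written the claim is not established.

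For comparison, the paper closes this direction with a short two-step argument you may want to adopt: first, one checks directly from the edge labels that $d_{\textup{E}}(\Phi(x),\Phi(-x))=\frac{\pi}{2}$ for \emph{every} antipodal pair, i.e.\ equality holds in the worst case; second, for general $x\neq x'$ one observes that $1$ cannot lie in both of the antipodal arcs $[-x',x]$ and $[x',-x]$, so there is a unit-speed geodesic $\gamma$ from $x'$ to $-x$ along which $\Phi\circ\gamma$ is $1$-Lipschitz, whence $h(t):=d_{\mathbb{S}^1}(x,\gamma(t))-d_{\textup{E}}(\Phi(x),\Phi(\gamma(t)))$ is nondecreasing and the general case is dominated by the antipodal one. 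That monotonicity reduction is the missing idea; without it (or a completed exhaustive case check) the bound $\mathrm{dis}(R)\leq\frac{\pi}{2}$ does not follow from what you have written.
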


Note that, by \Cref{distortandGH}, the claim above implies that $d_{\textup{GH}}(\textup{E},\mathbb{S}^1)\leq \frac{\pi}{4}$. Since $\textup{E}$ is a simply connected geodesic space, by \Cref{BigTheorem} we obtain $d_{\textup{GH}}(\textup{E},\mathbb{S}^1)= \frac{\pi}{4}$, which is the goal of this section.

\begin{proof}[Proof of \Cref{354ref34r}]
First note that $\Phi$ is surjective, so $\Phi$ is indeed a correspondence between $\mathbb{S}^1$ and $E$. It remains to prove that for all $x,x'\in\mathbb{S}^1$ we have \[\big|d_\textup{E}(\Phi(x),\Phi(x'))-d_{\mathbb{S}^1}(x,x')\big|\leq\frac{\pi}{2}.\]

First we prove the inequality $d_\textup{E}(\Phi(x),\Phi(x'))-d_{\mathbb{S}^1}(x,x')\leq\frac{\pi}{2}$. To do this, we may assume $x\neq x'$ and consider the geodesic segment $[x,x']$, given by a unit speed curve $\gamma:[0,d]\to\mathbb{S}^1$ with $\gamma(0)=x$, $\gamma(1)=x'$ and $d:=d_{\mathbb{S}^1}(x,x')\leq\frac{\pi}{2}$. Then the curve $\Phi\circ\gamma:[0,d]\to \textup{E}$ either is $1$-Lipschitz, in which case $d_\textup{E}(\Phi(x),\Phi(x'))\leq\text{len}(\gamma)\leq d$, or it has a jump of length $\frac{\pi}{2}$ at some point $s\in[0,d]$. In this case, $\Phi\circ\gamma$ is still $1$-Lipschitz in $[0,s)$ and $(s,d]$, so we have the inequality we wanted:
\[
d_\textup{E}(\Phi(x),\Phi(x'))\leq\text{length}(\gamma)\leq s+\frac{\pi}{2}+(s-d)=\frac{\pi}{2}+d=\frac{\pi}{2}+d_{\mathbb{S}^1}(x,x').
\]

Now we prove the remaining inequality, $d_{\mathbb{S}^1}(x,x')-d_\textup{E}(\Phi(x),\Phi(x'))\leq\frac{\pi}{2}$. When $x,x'$ are antipodal, we have equality: $d_{\textup{E}}(\Phi(x),\Phi(-x))=\frac{\pi}{2}$. This can be checked using the definition of $\Phi$ in the eight edges of $\mathbb{S}^1$.

For general points $x\neq x'$, note that $1$ cannot be in both of the geodesic segments $[-x',x]$ and $[x',-x]$, because they are antipodal segments of length $<\pi$. Suppose without loss of generality that $1$ is not in $[-x',x]$, so that, if $
\gamma:[0,d]\to\mathbb{S}^1$ is the unit speed geodesic with $\gamma(0)=x'$, $\gamma(d)=-x$ and $d=d_{\mathbb{S}^1}(x',-x)$, then the path $\Phi\circ\gamma$ is $1$-Lipschitz. Now consider the function 
\[
h:[0,d]\to\mathbb{R};h(t):=d_{\mathbb{S}^1}(x,\gamma(t))-d_\textup{E}(\Phi(x),\Phi(\gamma(t))).
\]
Then $h$ is increasing, because $\frac{d}{dt}d_{\mathbb{S}^1}(x,\gamma(t))=1$ and the map $t\mapsto d_\textup{E}(\Phi(x),\Phi(\gamma(t)))$ is $1$-Lipschitz. Thus, as we wanted, 
\[
\frac{\pi}{2}=d_{\mathbb{S}^1}(x,-x)-d_\textup{E}(\Phi(x),\Phi(-x))=\Phi(d)\geq \Phi(0)=d_{\mathbb{S}^1}(x,x')-d_\textup{E}(\Phi(x),\Phi(x')).
\]
\end{proof}

\section{A uniqueness result for the space $\mathrm{E}$}\label{UniquenessSection}
It is natural to ask whether $\mathrm{E}$ is, up to isometry, the only geodesic simply connected space with GH distance $\frac{\pi}{4}$ to $\mathbb{S}^1$. We now use $\mathrm{E}$ to obtain a big family of simply connected geodesic spaces which are exactly at GH distance $\frac{\pi}{4}$ from $\mathbb{S}^1$.
\begin{prop}\label{54rfe443rtgg}
Let $(X,d_X)$, $(Y,d_Y)$ and $(Z,d_Z)$ be nonempty metric spaces, and suppose $\text{diam}(Z)\leq2d_{\textup{GH}}(X,Y)$. Then the space $X\times Z$, with the metric 
\begin{equation*}
d_{X\times Z}((x,z),(x',z')):=\max(d_X(x,x'),d_Z(z,z')), 
\end{equation*}
satisfies $d_{\mathrm{GH}}(X\times Z,Y)\leq d_{\textup{GH}}(X,Y)$.
\end{prop}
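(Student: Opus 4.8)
The plan is to argue directly with correspondences through the formula \eqref{distortandGH}. Write $D:=d_{\textup{GH}}(X,Y)$; if $D=\infty$ there is nothing to prove, so I would assume $D<\infty$. First I would fix $\varepsilon>0$ and, using \eqref{distortandGH}, choose a correspondence $R\subseteq X\times Y$ with $\text{dis}(R)<2D+\varepsilon$. From $R$ I would build the ``cylinder'' relation
\[
R':=\{((x,z),y)\;:\;(x,y)\in R,\ z\in Z\}\subseteq(X\times Z)\times Y .
\]
Since $Z\neq\emptyset$ and $\pi_X(R)=X$, $\pi_Y(R)=Y$, the relation $R'$ surjects onto both $X\times Z$ and $Y$, hence it is a correspondence between them.

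Next I would estimate $\text{dis}(R')$. Given $((x,z),y),((x',z'),y')\in R'$, abbreviate $a:=d_X(x,x')$, $b:=d_Z(z,z')$, $c:=d_Y(y,y')$, so that $d_{X\times Z}((x,z),(x',z'))=\max(a,b)$. Since $(x,y),(x',y')\in R$ we have $|a-c|<2D+\varepsilon$, and since $\text{diam}(Z)\le 2D$ we have $0\le b\le 2D$. If $\max(a,b)=a$, then $|\max(a,b)-c|=|a-c|<2D+\varepsilon$. If $b>a$, then $\max(a,b)-c=b-c\le b\le 2D$ while also $b-c>a-c>-(2D+\varepsilon)$, so again $|\max(a,b)-c|<2D+\varepsilon$. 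Hence $\text{dis}(R')\le 2D+\varepsilon$, and \eqref{distortandGH} yields $d_{\textup{GH}}(X\times Z,Y)\le\tfrac12\text{dis}(R')\le D+\tfrac{\varepsilon}{2}$; letting $\varepsilon\to0$ gives the proposition.

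I do not expect a genuine obstacle: the relation $R'$ is the obvious candidate and the distortion bound is a two-line case split. The only point needing care — and the only place the hypothesis enters — is the case $b>a$, in which the $Z$-coordinate alone determines the product distance; there the assumption $\text{diam}(Z)\le 2D$ is exactly what keeps $b-c$ from exceeding $2D$. (It should also be remarked, as a routine check, that $\max(d_X,d_Z)$ is indeed a metric on $X\times Z$.)
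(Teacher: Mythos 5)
Your proof is correct and uses the same cylinder correspondence $R':=\{((x,z),y):(x,y)\in R,\ z\in Z\}$ and essentially the same distortion estimate as the paper. The only (cosmetic) difference is that the paper avoids the $\varepsilon$-argument by observing that \emph{every} correspondence $R$ automatically satisfies $\mathrm{dis}(R)\geq 2d_{\textup{GH}}(X,Y)\geq\mathrm{diam}(Z)$, so $\mathrm{dis}(R_Z)\leq\max(\mathrm{dis}(R),\mathrm{diam}(Z))=\mathrm{dis}(R)$ holds uniformly and one can pass to the infimum directly.
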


\begin{proof}
For any correspondence $R\subseteq X\times Y$, which by \Cref{distortandGH} satisfies dis$(R)\geq2d_{\textup{GH}}(X,Y)\geq\text{diam}(Z)$, we can define the correspondence
\begin{equation*}
R_Z:=\{((x,z),y);(x,y)\in R,z\in Z\}\subseteq(X\times Z)\times Y.
\end{equation*}
We will be done by \Cref{distortandGH} if we prove that $\text{dis}(R_Z)\leq\text{dis}(R)$. Indeed, letting $((x,z),y),((x',z'),y')\in R_Z$, we have
\begin{align*}
d_Y(y,y')-d_{X\times Z}((x,z),(x',z'))
&\leq d_Y(y,y')-d_X(x,x')\\
&\leq\text{dis}(R).\\
d_{X\times Z}((x,z),(x',z'))-d_Y(y,y')
&=\max(d_X(x,x')-d_Y(y,y'),d_Z(z,z')-d_Y(y,y'))\\
&\leq\max(\text{dis}(R),\text{diam}(Z))\\
&\leq\text{dis}(R).\qedhere
\end{align*}
\end{proof}

\begin{cor}\label{453retrgfgdff}
Let $(X,d_X)$ be a nonempty simply connected geodesic space with $\text{diam}(X)\leq\frac{\pi}{2}$. Give $X\times\mathrm{E}$ the metric 
\begin{equation*}
d_{X\times\mathrm{E}}((x,e),(x',e'))=\max(d_X(x,x'),d_{\mathrm{E}}(e,e')).
\end{equation*} 
Then $(X\times\mathrm{E},d_{X\times\mathrm{E}})$ is a simply connected geodesic space and $d_{\textup{GH}}(X\times\mathrm{E})=\frac{\pi}{4}$.
\end{cor}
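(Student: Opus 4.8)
The plan is to verify the three assertions of the corollary in turn: that $X\times\mathrm E$ is geodesic, that it is simply connected, and that $d_{\textup{GH}}(X\times\mathrm E,\mathbb S^1)=\frac\pi4$ (which is what the displayed equation asserts, the second argument being $\mathbb S^1$). Throughout one uses that the space $\mathrm E$ of \Cref{EO} is a compact $\mathbb R$-tree, hence geodesic and contractible.

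\textbf{$X\times\mathrm E$ is geodesic.} Given points $(x,e),(x',e')$, I would pick unit-speed geodesics $\gamma\colon[0,\ell_X]\to X$ from $x$ to $x'$ and $\eta\colon[0,\ell_{\mathrm E}]\to\mathrm E$ from $e$ to $e'$, where $\ell_X=d_X(x,x')$ and $\ell_{\mathrm E}=d_{\mathrm E}(e,e')$, and assume without loss of generality $\ell_X\geq\ell_{\mathrm E}$. Reparametrize $\eta$ to a $1$-Lipschitz map on $[0,\ell_X]$ (e.g.\ at constant speed $\ell_{\mathrm E}/\ell_X\leq 1$). Then for $0\leq s\leq t\leq\ell_X$ one has $d_X(\gamma(s),\gamma(t))=t-s$ (a subsegment of a geodesic is a geodesic) and $d_{\mathrm E}(\eta(s),\eta(t))\leq t-s$, so $d_{X\times\mathrm E}((\gamma(s),\eta(s)),(\gamma(t),\eta(t)))=t-s$. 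Hence $t\mapsto(\gamma(t),\eta(t))$ is a geodesic realizing $d_{X\times\mathrm E}((x,e),(x',e'))=\ell_X=\max(d_X(x,x'),d_{\mathrm E}(e,e'))$.

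\textbf{$X\times\mathrm E$ is simply connected.} The metric $d_{X\times\mathrm E}$ induces the product topology on $X\times\mathrm E$, which therefore deformation retracts onto a slice $X\times\{e_0\}\cong X$ by contractibility of $\mathrm E$; since $X$ is path-connected and simply connected, so is $X\times\mathrm E$. (Equivalently, $\pi_1(X\times\mathrm E)\cong\pi_1(X)\times\pi_1(\mathrm E)=0$.)

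\textbf{The distance equals $\frac\pi4$.} For the upper bound I would apply \Cref{54rfe443rtgg} with $(\mathrm E,\mathbb S^1,X)$ in the roles of $(X,Y,Z)$: the required hypothesis is $\text{diam}(X)\leq 2\,d_{\textup{GH}}(\mathrm E,\mathbb S^1)$, and since $d_{\textup{GH}}(\mathrm E,\mathbb S^1)\leq\frac\pi4$ by \Cref{354ref34r} this follows from $\text{diam}(X)\leq\frac\pi2$. Thus $d_{\textup{GH}}(\mathrm E\times X,\mathbb S^1)\leq d_{\textup{GH}}(\mathrm E,\mathbb S^1)\leq\frac\pi4$, and as $(\mathrm E\times X,d_{\mathrm E\times X})$ is isometric to $(X\times\mathrm E,d_{X\times\mathrm E})$ via the coordinate swap, $d_{\textup{GH}}(X\times\mathrm E,\mathbb S^1)\leq\frac\pi4$. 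For the lower bound, by the first two parts $X\times\mathrm E$ is a simply connected geodesic (hence length) space, so \Cref{BigTheorem} gives $d_{\textup{GH}}(X\times\mathrm E,\mathbb S^1)\geq\frac\pi4$. Combining the two bounds finishes the proof. The only step that is not pure bookkeeping is showing that the $\ell^\infty$-product of two geodesic spaces is geodesic, i.e.\ the reparametrization argument above; everything else is a topological triviality or a direct invocation of \Cref{54rfe443rtgg} and \Cref{BigTheorem}, so I expect no serious obstacle.
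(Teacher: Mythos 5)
Your proposal follows the same route as the paper: the paper's proof is two lines, citing a reference for the fact that the $\ell^\infty$-product of geodesic (resp.\ simply connected) spaces is geodesic (resp.\ simply connected), and then invoking \Cref{54rfe443rtgg} together with \Cref{BigTheorem}. You supply the product arguments directly (the constant-speed reparametrization making the pair of geodesics a geodesic for the max metric, and $\pi_1(X\times\mathrm{E})\cong\pi_1(X)\times\pi_1(\mathrm{E})=0$ since $\mathrm{E}$ is a contractible $\mathbb{R}$-tree); both arguments are correct, and this makes the proof more self-contained than the paper's.

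There is, however, one logical slip in your verification of the hypothesis of \Cref{54rfe443rtgg}. With $(\mathrm{E},\mathbb{S}^1,X)$ in the roles of $(X,Y,Z)$, the hypothesis is $\text{diam}(X)\leq 2\,d_{\textup{GH}}(\mathrm{E},\mathbb{S}^1)$. You claim this follows from $\text{diam}(X)\leq\frac{\pi}{2}$ together with the \emph{upper} bound $d_{\textup{GH}}(\mathrm{E},\mathbb{S}^1)\leq\frac{\pi}{4}$ of \Cref{354ref34r}; but that inequality points the wrong way, since it only gives $2\,d_{\textup{GH}}(\mathrm{E},\mathbb{S}^1)\leq\frac{\pi}{2}$. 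What you actually need is the \emph{lower} bound $d_{\textup{GH}}(\mathrm{E},\mathbb{S}^1)\geq\frac{\pi}{4}$, which holds by \Cref{BigTheorem} applied to $\mathrm{E}$ itself (equivalently, by \Cref{Counterexample}, which gives equality). The upper bound of \Cref{354ref34r} is then used where you do use it, namely to pass from $d_{\textup{GH}}(\mathrm{E}\times X,\mathbb{S}^1)\leq d_{\textup{GH}}(\mathrm{E},\mathbb{S}^1)$ to $d_{\textup{GH}}(\mathrm{E}\times X,\mathbb{S}^1)\leq\frac{\pi}{4}$. Swapping in the correct citation repairs the argument completely; everything else stands.
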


\begin{proof}
The space $X\times\mathrm{E}$ is simply connected and geodesic (see \cite[Page 2]{Kı}). It then follows from \Cref{54rfe443rtgg} and \Cref{BigTheorem} that $d_{\textup{GH}}(X\times\mathrm{E},\mathbb{S}^1)=\frac{\pi}{4}$.
\end{proof}

\begin{note}\label{45rteretreggfd}
More generally, the proof of \Cref{453retrgfgdff} shows that any geodesic, simply connected subspace $Z\subseteq X\times\mathrm{E}$ such that the coordinate projection $\pi_{\mathrm{E}}:Z\to\mathrm{E}$ is surjective will also be at distance $\frac{\pi}{4}$ from $\mathbb{S}^1$.
\end{note} 

So E is not the only geodesic, simply connected space at distance $\frac{\pi}{4}$ from $\mathbb{S}^1$. However, we prove below that E is the only \textit{complete tree} with minimal length at distance $\frac{\pi}{4}$ from $\mathbb{S}^1$; to formalize this statement we will need to introduce some concepts and notation:

\begin{definition}[{\cite[Defs. 2.1, 2.2.]{Be}}]
An \textit{arc} in a metric space $X$ is a subspace of $X$ homeomorphic to the interval $[0,1]$.
A metric space $(X,d_X)$ is an \textit{$\mathbb{R}$-tree} if for every $x,x'\in X$ there is a unique arc with endpoints $x,y$ and this arc is a geodesic segment (i.e., isometric to an interval of $\mathbb{R}$). We denote by $[x,x']\subseteq X$ the geodesic segment from $x$ to $x'$. 
We will define the \textit{length} of the $\mathbb{R}$-tree $X$ to be the supremum of all sums of lengths of disjoint arcs contained in $X$.
\end{definition}
Note that all $\mathbb{R}$-trees are geodesic by definition, and the distance between any two points $x,x'$ of an $\mathbb{R}$-tree is the length of the arc $[x,x']$. Moreover, any $\mathbb{R}$-tree $X$ is contractible: indeed, for any $x_0\in X$ we can define a homotopy $(f_t)_{t\in[0,1]}$, $f_t:X\to X$, from a constant map to Id$_X$ by letting $f_t(x)$ be the point in $[x,x_0]$ at distance $t\cdot d_X(x,x_0)$ from $x_0$. Thus, by \Cref{BigTheorem}, any $\mathbb{R}$-tree $X$ satisfies $d_{\mathrm{GH}}(X,\mathbb{S}^1)\geq\frac{\pi}{4}$.

\begin{prop}\label{34we23ioepqwpw9}
Let $(X,d_X)$ be a complete $\mathbb{R}$-tree with length $\leq\frac{5\pi}{4}$ and such that $d_{\textup{GH}}(X,\mathbb{S}^1)=\frac{\pi}{4}$. Then $(X,d_X)$ is isometric to \textup{E}.
\end{prop}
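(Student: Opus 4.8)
The plan is to reconstruct $\mathrm E$ from the data $d_{\textup{GH}}(X,\mathbb S^1)=\frac\pi4$ together with the length bound $\ell(X)\le\frac{5\pi}4$. First I would fix an optimal (or near-optimal) correspondence $R\subseteq\mathbb S^1\times X$ with $\operatorname{dis}(R)\le\frac\pi2+\varepsilon$ and use it to transport structure between the two spaces. The key geometric input on the $\mathbb S^1$ side is that antipodal points are at distance $\pi$, so any $(z,x),(-z,x')\in R$ force $d_X(x,x')\ge\pi-\operatorname{dis}(R)\ge\frac\pi2-\varepsilon$; dually, points of $\mathbb S^1$ at distance $\le\frac\pi2$ correspond to points of $X$ at distance $\le\pi$, and since $\operatorname{diam}(\mathbb S^1)=\pi$ we get $\operatorname{diam}(X)\le\pi+\operatorname{dis}(R)\le\frac{3\pi}2$. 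I expect these two facts, pushed carefully, to pin down a small finite ``skeleton'' in $X$: pick three equally spaced points on $\mathbb S^1$ (pairwise distance $\frac{2\pi}3$, but it may be cleaner to use four points $1,i,-1,-i$) and their $R$-partners in $X$; the distance constraints plus the tree structure (all triangles are tripods, distances are additive along arcs) severely restrict the configuration. Because $X$ is an $\mathbb R$-tree, once a finite set of points and the pairwise distances among them are known, the convex hull (the spanned subtree) is determined up to isometry.

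The second main step is to argue that this spanned subtree is essentially all of $X$, using the length bound. An $\mathbb R$-tree of length $\le\frac{5\pi}4$ cannot contain long ``hidden'' branches: any point of $X$ must be within a controlled distance of the skeleton, for otherwise the correspondence would be forced to have distortion $>\frac\pi2$ (a far-away leaf $x$ together with an appropriate antipodal pair creates three points in $X$ whose mutual distances are too large to be $\frac\pi2$-compatible with three points on $\mathbb S^1$). Quantitatively, I would show that every leaf of $X$ lies at distance exactly contributing to make the total length hit $\frac{5\pi}4$, and that the only way to fit the required ``width'' (diameter close to $\frac{3\pi}2$? — here I must be careful, since $\mathrm E$ has diameter $\frac{3\pi}4$, not $\frac{3\pi}2$) into budget $\frac{5\pi}4$ is the five-edge configuration of $\mathrm E$ with all edges of length $\frac\pi4$. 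So the logical skeleton is: (i) lower-bound various distances in $X$ from antipodality in $\mathbb S^1$; (ii) combine with the tree axioms to show $X$ contains a subtree isometric to $\mathrm E$ (or contains $\mathrm E$'s five leaves in the right metric position); (iii) use $\ell(X)\le\frac{5\pi}4=\ell(\mathrm E)$ to conclude there is nothing more, i.e. $X=\mathrm E$; completeness handles the closure/limit-of-leaves issues so that ``$X$ is its skeleton'' is literally true rather than true up to completion.

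The main obstacle I anticipate is step (ii): extracting the precise combinatorial shape of $\mathrm E$ — three leaves hanging off one internal vertex and two more leaves off another, with a central edge — rather than merely ``some tree of length $\frac{5\pi}4$.'' The distortion bound $\frac\pi2$ is exactly tight for $\mathrm E$, so there is no slack to waste; I would need to choose the finitely many test points on $\mathbb S^1$ optimally and track the induced constraints as a tight optimization, showing the feasible region is a single point in ``tree-configuration space.'' A natural device is to run the surjective map $\Phi\colon\pi_1(\mathbb S^1)\to\pi_1(X)$ machinery of \Cref{45trfggff} in reverse to locate where a generating loop of $\mathbb S^1$ must ``wrap,'' forcing a branch point of $X$ to sit near each of several points of $\mathbb S^1$; alternatively one argues purely metrically via the tree's Gromov products. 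Either way, the length budget $\frac{5\pi}4$ is what rules out all competitors, and verifying that the unique surviving configuration is isometric to the explicitly described $\mathrm E$ is then a finite check. I would also need the easy observations that $X$ is compact (a complete $\mathbb R$-tree of finite length and finite diameter is compact) and that $\operatorname{diam}(X)=\frac{3\pi}4$ exactly, both of which follow from the distortion estimates combined with the length bound and will be used freely.
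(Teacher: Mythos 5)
Your overall strategy (optimal correspondence, antipodality forcing large distances in $X$, tree additivity, and the length budget $\frac{5\pi}{4}$ killing everything beyond a skeleton) points in the right direction, but the proposal is missing the one idea that actually makes the argument close, and it contains factual errors about the target space. The crux is not restricting a configuration of finitely many test points: since $R[p]$ can be a large, badly distributed set, the $R$-partners of three or four chosen points of $\mathbb{S}^1$ are not pinned down well enough to force the combinatorics of the tree, and in particular nothing in your step (i)--(ii) rules out, say, a tree that is a bare segment with many tiny branches. What the paper does instead (\Cref{45trefd43}) is a \emph{connectedness} argument over the whole circle: fix a diameter-realizing segment $[x_-,x_+]$ (one first shows $\mathrm{diam}(X)=\pi$, using that every point is within $\mathrm{diam}(X)/2$ of the midpoint of $[x_-,x_+]$ in a tree), project every $x_p\in R[p]$ onto $[x_-,x_+]$, check $\Pi(x_p)\neq\Pi(x_{-p})$, and run an intermediate-value argument in $p$ over a half-circle of antipodal pairs; at the crossover parameter one produces $x,x'\in R[p]$ and $y\in R[-p]$ with $\Pi(x)<\Pi(y)<\Pi(x')$, and tree additivity then contradicts $\mathrm{dis}(R)=\frac{\pi}{2}$ unless some branch off $[x_-,x_+]$ has length exactly $\frac{\pi}{4}$. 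A second, separate argument (\Cref{45tref44343}, \Cref{54tergfd565445re}) is then needed to show $R[x_0]$ consists of exactly two points at distance $\frac{\pi}{2}$ and to force the branch point to be the midpoint $x_0$; this is again a rigidity chain in which every inequality must be an equality, and it therefore requires an \emph{exactly} optimal closed correspondence with $\mathrm{dis}(R)=\frac{\pi}{2}$ (which exists by compactness, \Cref{45trgdfv4re} plus Chowdhury--M\'emoli), not the $\frac{\pi}{2}+\varepsilon$ version you start from.

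Two further concrete problems: first, your description of $\mathrm{E}$ is wrong --- it is a tripod with a single branch point and legs of lengths $\frac{\pi}{2},\frac{\pi}{2},\frac{\pi}{4}$ (five edges of length $\frac{\pi}{4}$), with diameter $\pi$, not $\frac{3\pi}{4}$ and not a two-vertex caterpillar; so the ``finite check'' at the end of your plan would verify the candidate against the wrong model. Second, the suggestion to run the $\pi_1$ machinery of \Cref{45trfggff} ``in reverse'' to locate branch points is vacuous here: $X$ is an $\mathbb{R}$-tree, hence contractible, so its fundamental group carries no information and the uniqueness proof must be, as in the paper, purely metric.
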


\begin{note}\label{Note3efrdfd}
\Cref{34we23ioepqwpw9} does not hold for finite simplicial graphs instead of $\mathbb{R}$-trees: for example, the metric space $\left(\mathbb{S}^1,\frac{d_{\mathbb{S}^1}}{2}\right)$ has length $\pi$ and is at GH distance $\frac{\pi}{4}$ from $\mathbb{S}^1$ (the identity map $\mathbb{S}^1\to\mathbb{S}^1$ gives an optimal correspondence).\\

The condition of having length $\leq\frac{5\pi}{4}$ is also necessary to prove uniqueness of \textup{E} in \Cref{34we23ioepqwpw9}, that is, there are $\mathbb{R}$-trees $X$ not isometric to \textup{E} and such that $d_{\textup{GH}}(X,\mathbb{S}^1)=\frac{\pi}{4}$\footnote{This fact doesn't seem to follow from \Cref{45rteretreggfd}, so we give a separate construction.}. Indeed, consider a metric tree \textup{E'} obtained from E by increasing the length of the edge $c$ (see \Cref{ES1}) from $\frac{\pi}{4}$ to $\frac{\pi}{2}$. We can consider E as a subspace of $\textup{E'}$, with $s:=\textup{E'}\setminus\textup{E}$ being a segment of length $\frac{\pi}{4}$. Letting $R\subseteq\mathbb{S}^1\times\textup{E}$ be the correspondence from \Cref{354ref34r}, we can define a correspondence 
\begin{equation*}
R'=R\bigcup(\{-1\}\times s)\subseteq\mathbb{S}^1\times\textup{E'},
\end{equation*}
and it is not hard to check that $\text{dis}(R')=\frac{\pi}{2}$.
\end{note}

\begin{note}
Note that the condition `$d_{\textup{GH}}(X,\mathbb{S}^1)=\frac{\pi}{4}$' can be equivalently expressed in the following way\footnote{To check that both statements are equivalent, one needs to use that $\mathbb{R}$-trees of finite length are compact, see \Cref{45trgdfv4re}.}: there exist isometric copies of $X$ and $\mathbb{S}^1$ inside a bigger metric space $(Z,d_Z)$ such that the two following conditions are satisfied:
\begin{enumerate}
    \item\label{cond1dXs1} $d_Z(x,\mathbb{S}^1)\leq\frac{\pi}{4}$ for all $x\in X$.
    \item\label{cond2dXs1} $d_Z(p,X)\leq\frac{\pi}{4}$ for all $p\in\mathbb{S}^1$.
\end{enumerate}
\end{note}
Thus, \Cref{34we23ioepqwpw9} may be informally expressed as `besides $\mathrm{E}$, no other complete $\mathbb{R}$-tree $X$ of length $\leq\frac{5\pi}{4}$ can satisfy conditions \ref{cond1dXs1} and \ref{cond2dXs1} simultaneously'. If we remove condition \ref{cond2dXs1}, it is clear that many complete $\mathbb{R}$-trees of length $\leq\frac{5\pi}{4}$ can satisfy condition \ref{cond1dXs1} (e.g. the $\mathbb{R}$-tree formed by a single point). So we certainly need to use condition \ref{cond2dXs1} in order to prove our theorem. However, one may at first think that condition \ref{cond1dXs1} is not necessary to prove our uniqueness theorem, that is, that any $\mathbb{R}$-tree $X$ of length $\leq\frac{5\pi}{4}$ satisfying condition \ref{cond2dXs1} is isometric to $\mathrm{E}$. That is, `if an $\mathbb{R}$-tree $X$ is smaller than $E$, then we cannot place $X$ and $\mathbb{S}^1$ in such a way that all points of $\mathbb{S}^1$ are at distance $\leq\frac{\pi}{4}$ of $X$'. This is, however, not true. Indeed, consider the $\mathbb{R}$-tree $X$ shown below, which has four edges of length $\frac{\pi}{4}$ and four extreme points $p_1,p_2,p_3,p_4$. Then $X$ has length $\pi$, but if we divide $\mathbb{S}^1$ into four intervals $I_1,I_2,I_3,I_4$ of length $\frac{\pi}{2}$, one can give the disjoint union $\mathbb{S}^1\sqcup X$ a metric in which, for each $i=1,2,3,4$, all the points of $I_i$ are at distance $\frac{\pi}{4}$ from $p_i$.
\begin{figure}[h]
    \centering
    \includegraphics[width=0.25\linewidth]{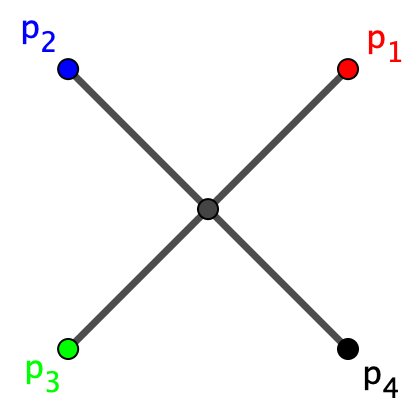}
    \qquad\qquad
    \includegraphics[width=0.25\linewidth]{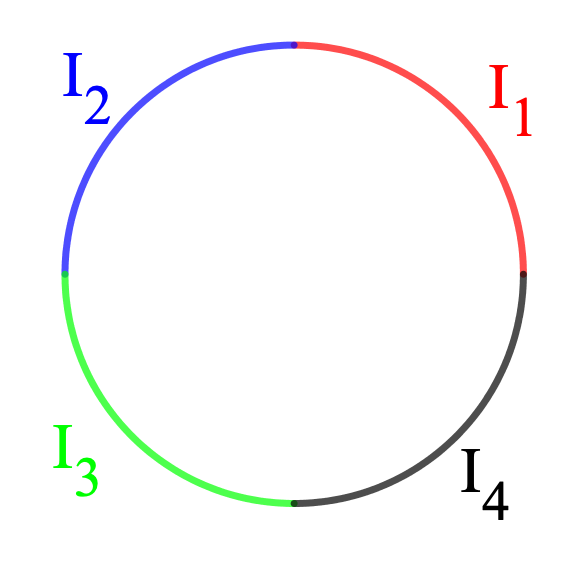}
\end{figure}

The proof of \Cref{34we23ioepqwpw9} comprises the rest of this section. Fix a complete $\mathbb{R}$-tree $(X,d_X)$ of length at most $\frac{5\pi}{4}$ satisfying $d_{\mathrm{GH}}(X,\mathbb{S}^1)=\frac{\pi}{4}$.
\begin{claim}\label{45trgdfv4re}
$X$ is compact.
\end{claim}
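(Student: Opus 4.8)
The plan is to show that an $\mathbb{R}$-tree of finite length is automatically totally bounded, and then invoke completeness. Since $X$ is assumed complete, total boundedness will give compactness. So the goal reduces to: \emph{a complete $\mathbb{R}$-tree $X$ of length $L<\infty$ is totally bounded}; here $L\le\frac{5\pi}{4}$.

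First I would fix $\varepsilon>0$ and try to produce a finite $\varepsilon$-net. The idea is to build a maximal $\varepsilon$-separated subset $S\subseteq X$, i.e. a set of points pairwise at distance $\ge\varepsilon$, chosen maximal with respect to inclusion (using Zorn's lemma if needed, though one can also build it greedily). Maximality forces $S$ to be an $\varepsilon$-net: any $x\in X$ with $d_X(x,S)\ge\varepsilon$ could be added, contradicting maximality. So it remains to bound $|S|$. The key geometric fact about $\mathbb{R}$-trees I would use is that if $S=\{x_1,\dots,x_k\}$ is finite, then the convex hull (union of all arcs $[x_i,x_j]$, equivalently the smallest subtree containing $S$) is itself a finite metric tree, and one can extract from it a spanning-tree-like collection of disjoint arcs whose total length is at least a constant times $k\varepsilon$. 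More precisely: order the points and add them one at a time; when adding $x_{m+1}$ to the subtree $T_m$ spanned by $x_1,\dots,x_m$, the arc from $x_{m+1}$ to its nearest point $\pi(x_{m+1})$ on $T_m$ has length $d_X(x_{m+1},T_m)$. Now $d_X(x_{m+1},T_m)\ge \tfrac{1}{2}\,d_X(x_{m+1},\{x_1,\dots,x_m\})$ is not quite automatic, but in an $\mathbb{R}$-tree one actually has the cleaner statement: if $y$ is the point of $T_m$ closest to $x_{m+1}$, then for every $i\le m$, $d_X(x_{m+1},x_i)=d_X(x_{m+1},y)+d_X(y,x_i)$, so $d_X(x_{m+1},y)$ could a priori be small. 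To fix this I would instead be more careful: discard the first point, and for each subsequent $x_i$ the ``pendant arc'' attached when $x_i$ joins has length $\ge \varepsilon/2$ — this follows because the attaching point $y$ satisfies $d_X(x_i,y)+d_X(y,x_j)=d_X(x_i,x_j)\ge\varepsilon$ for the relevant previously-added $x_j$, and one can always choose the insertion order (e.g. a ``farthest-point'' order, always inserting the point farthest from the current subtree) so that $d_X(x_i,T_{i-1})\ge\varepsilon/2$. Actually the farthest-insertion order makes this transparent: $d_X(x_i,T_{i-1})$ is non-increasing in $i$, and as long as $d_X(x_i,T_{i-1})<\varepsilon/2$ we'd have every remaining point within $\varepsilon/2$ of $T_{i-1}$, hence within $\varepsilon/2$ of the finite set already chosen — but then no two remaining points can be $\varepsilon$-separated unless... one has to argue this forces the process to have already terminated. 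Let me just say: with farthest-point insertion, all the pendant arcs $[x_i,T_{i-1}]$ for $i\ge 2$ are pairwise disjoint and each has length $\ge d_X(x_i,T_{i-1})$, and these lengths stay $\ge\varepsilon/2$ until essentially all of $S$ is exhausted. Summing, $L\ge (|S|-O(1))\cdot\varepsilon/2$, so $|S|\le 2L/\varepsilon+O(1)<\infty$. Hence a finite $\varepsilon$-net exists for every $\varepsilon$, so $X$ is totally bounded, and being complete it is compact.

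The main obstacle I anticipate is making the ``pendant arcs are long and disjoint'' bookkeeping fully rigorous — specifically, justifying that one can choose an insertion order so that each newly attached arc has length bounded below by a fixed fraction of $\varepsilon$, and confirming that these arcs are genuinely disjoint (not just their interiors, and not overlapping at attaching points in a way that spoils the length sum). In an $\mathbb{R}$-tree the disjointness is clean: the arc from $x_i$ to its projection $\pi_i$ onto $T_{i-1}$ meets $T_{i-1}$ only at $\pi_i$, so the half-open arcs $[x_i,\pi_i)$ are pairwise disjoint and their lengths genuinely add. The quantitative lower bound $d_X(x_i,\pi_i)\ge\varepsilon/2$ is where care is needed; one clean route is: if $d_X(x_i,\pi_i)<\varepsilon/2$ for some $i$ in the farthest-point order, then every not-yet-inserted point is within $\varepsilon/2$ of $T_{i-1}$, but $T_{i-1}$ is a finite tree of total length $<L$ which can itself be covered by finitely many $\varepsilon/2$-balls (induction on the number of edges, or a direct covering argument for a finite metric tree), contradicting that infinitely many — or more than $2L/\varepsilon + O(1)$ many — points of $S$ remain $\varepsilon$-separated. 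Since $L\le\frac{5\pi}{4}$ is finite this closes the argument, though the bound $\frac{5\pi}{4}$ itself plays no special role here beyond finiteness. Once compactness is established, $X$ being a compact geodesic space also means the correspondence-based and Hausdorff-distance-based formulations of $d_{\mathrm{GH}}(X,\mathbb{S}^1)=\frac{\pi}{4}$ agree and are attained, which is the reason this claim is placed first.
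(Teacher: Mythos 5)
Your overall strategy (finite length $\Rightarrow$ totally bounded, then completeness $\Rightarrow$ compact) matches the paper's, but the mechanism you propose for bounding the size of an $\varepsilon$-separated set has a genuine gap. The claim that in farthest-point insertion order each pendant arc has length $\ge\varepsilon/2$ ``until essentially all of $S$ is exhausted'' is false. Take the points $0,\varepsilon,2\varepsilon,\dots,k\varepsilon$ on a segment: this set is $\varepsilon$-separated, but farthest-point insertion picks $0$ and then $k\varepsilon$, after which $T_2=[0,k\varepsilon]$ already contains every remaining point, so all subsequent pendant arcs have length $0$. More generally, if $\pi(x)$ lies on the arc $[x_i,x_j]$ inside the current subtree, then $2\,d_X(x,\pi(x))=d_X(x,x_i)+d_X(x,x_j)-d_X(x_i,x_j)$, which can vanish even for $\varepsilon$-separated points. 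Your fallback (``cover $T_{i-1}$ by finitely many $\varepsilon/2$-balls and conclude only finitely many separated points remain'') does not close either: a point within $\varepsilon/2$ of $T_{i-1}$ and a center of a small covering ball are within $\varepsilon/2+\delta$ of each other, so two such points can be up to $\varepsilon+2\delta$ apart, which is compatible with $\varepsilon$-separation; and bounding the number of $\varepsilon$-separated points near a finite tree of length $\le L$ is exactly the statement you were trying to prove in the first place, so the argument is circular as written.

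The paper avoids all of this bookkeeping. It argues by contradiction: if $X$ is not totally bounded there is an infinite set $(x_n)_n$ with $d_X(x_i,x_j)>4\varepsilon$; the balls $B_X(x_n,2\varepsilon)$ are then pairwise disjoint, and each one contains a segment of length $\varepsilon$ (namely an initial piece of the geodesic from $x_n$ to any other $x_m$, which has length $>4\varepsilon$). Infinitely many disjoint arcs of length $\varepsilon$ contradict the finiteness of the length of $X$. The point is that one never needs a quantitative bound on the size of a net, only the qualitative fact that an \emph{infinite} separated set would force infinite length; each separated point ``owns'' a disjoint segment of definite length inside its own ball. If you want to keep your constructive net-counting approach, the correct lower bound to use is not the pendant-arc length but exactly this: the disjoint balls $B_X(x_n,\varepsilon/2)$ each contribute a disjoint arc of length $\varepsilon/4$, giving $|S|\le 4L/\varepsilon$ directly.
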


\begin{proof}
It is enough to prove that $X$ is totally bounded. If not, there is some $\varepsilon>0$ and points $(x_n)_{n\in\mathbb{N}}\in X$ with $d(x_i,x_j)>4\varepsilon$ if $i\neq j$. So the balls $B_n:=B_X(x_n,2\varepsilon)$ are pairwise disjoint, and each contains a segment of length $\varepsilon$, contradicting the fact that $X$ has finite length.
\end{proof}

As $X$ and $\mathbb{S}^1$ are compact and $d_{\mathrm{GH}}(X,\mathbb{S}^1)=\frac{\pi}{4}$, it is proved in \cite[Proposition 1.1.]{CM} that there is a correspondence $R\subseteq X\times\mathbb{S}^1$ achieving the minimal distortion $\textup{dis}(R)=\frac{\pi}{2}$, and such that $R$ is a closed subspace of $X\times\mathbb{S}^1$.

So from now on, we fix a closed correspondence $R\subseteq X\times\mathbb{S}^1$ with distortion $\frac{\pi}{2}$. For any $x\in X$ we let $R[x]:=\{p\in\mathbb{S}^1;(x,p)\in R\}$, and for any $p\in\mathbb{S}^1$ we let $R[p]:=\{x\in X;(x,p)\in R\}$ and we denote by $-p$ the antipodal of $p$. We will fix two points $x_-,x_+\in X$ with $d_X(x_-,x_+)=\text{diam}(X)$ and let $x_0$ be the midpoint of $[x_-,x_+]$. Finally, we define the `projection' map 
\begin{equation*}
\Pi:X\to[x_-,x_+]
\end{equation*}
by the equation $d_X(x,\Pi(x))=d_X(x,[x_-,x_+])$.
Note that if two points $x,x'\in X$ have $\Pi(x)\neq\Pi(x')$, then the union of $[x,\Pi(x)],[\Pi(x),\Pi(x')]$ and $[\Pi(x'),x']$ is the geodesic arc from $x$ to $x'$ (see figure \Cref{ES342ewdfgdfv}), so we have
\begin{equation}\label{43frreefseewrewrerwwer}
d_X(x,x')=d_X(x,\Pi(x))+d_X(\Pi(x),\Pi(x'))+d_X(\Pi(x'),x').
\end{equation}
\begin{figure}[h]
    \centering
    \includegraphics[width=0.6\linewidth]{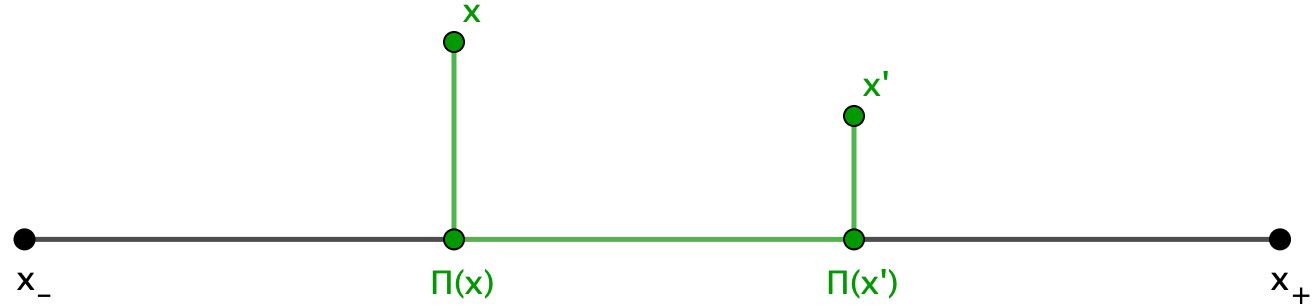}
    \caption{Geodesics between points $x,x'\in X$ with $\Pi(x)\neq\Pi(x')$.}
    \label{ES342ewdfgdfv}
\end{figure}

We will repeatedly use \Cref{43frreefseewrewrerwwer} to compute distances in $X$.

\begin{definition}
A \textit{tripod} is an $\mathbb{R}$-tree $Y$ with three points $y_0,y_1,y_2\in Y$ (the \textit{extreme points} of $Y$) such that $Y=[y_0,y_1]\cup[y_0,y_2]\cup[y_1,y_2]$.
\end{definition}

\begin{claim}\label{45trefd43}
We have $d_X(x_-,x_+)=\text{diam}(X)=\pi$, and the space $X$ is a tripod with extreme points $x_-,x_+$, and a point $x_1$ with $d_X(x_1,\Pi(x_1))=\frac{\pi}{4}$.
\end{claim}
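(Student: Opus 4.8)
The plan is to split the claim into two statements about $X$ alone and then let the hypothesis $\text{len}(X)\le\frac{5\pi}{4}$ handle the bookkeeping. The two statements are: (i) $\text{diam}(X)\ge\pi$; and (ii) there is a point $x_1\in X$ with $d_X(x_1,\Pi(x_1))\ge\frac{\pi}{4}$. Granting these, one checks first that $\Pi(x_1)$ lies in the interior of $[x_-,x_+]$ — if, say, $\Pi(x_1)=x_-$, then $d_X(x_1,x_+)=d_X(x_1,\Pi(x_1))+d_X(x_-,x_+)\ge\frac{\pi}{4}+\pi>\text{diam}(X)$, a contradiction. Hence the three segments $[x_-,\Pi(x_1)]$, $[\Pi(x_1),x_+]$ and $[\Pi(x_1),x_1]$ emanate from $\Pi(x_1)$ in three distinct directions; shrinking each slightly away from $\Pi(x_1)$ produces three pairwise disjoint arcs whose total length is as close as desired to $d_X(x_-,x_+)+d_X(x_1,\Pi(x_1))$, so $\text{len}(X)\ge d_X(x_-,x_+)+d_X(x_1,\Pi(x_1))\ge\pi+\frac{\pi}{4}=\frac{5\pi}{4}$. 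Since $\text{len}(X)\le\frac{5\pi}{4}$, this is an equality, forcing $\text{diam}(X)=d_X(x_-,x_+)=\pi$, $d_X(x_1,\Pi(x_1))=\frac{\pi}{4}$ and $\text{len}(X)=\frac{5\pi}{4}$. Finally, if some $y\in X$ lay outside the compact subtree $T:=[x_-,x_+]\cup[\Pi(x_1),x_1]$ (whose length is $\pi+\frac{\pi}{4}=\frac{5\pi}{4}$), taking the nearest point $z\in T$ to $y$ and adjoining a subarc of $[z,y]$ to a near-optimal disjoint family inside $T$ would give $\text{len}(X)>\frac{5\pi}{4}$; so $X=T$, which is exactly the tripod with extreme points $x_-,x_+,x_1$.

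For (i) I would use only the midpoint $x_0$ of $[x_-,x_+]$ and one antipodal pair of $\mathbb{S}^1$. Equation \eqref{43frreefseewrewrerwwer} (or directly the definition of $\Pi$) gives the standard $\mathbb{R}$-tree fact that every $y\in X$ satisfies $d_X(x_0,y)\le\frac{1}{2}\text{diam}(X)$; indeed, if $\Pi(y)$ lies on the $x_+$-side of $x_0$ then $d_X(x_-,y)=\frac{1}{2}\text{diam}(X)+d_X(x_0,y)$, which must be $\le\text{diam}(X)$, and symmetrically otherwise. Now choose $p\in R[x_0]$ — nonempty since $R$ is a correspondence — set $q=-p$, and choose any $x\in R[q]$. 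As $(x_0,p),(x,q)\in R$ and $d_{\mathbb{S}^1}(p,q)=\pi$, the equality $\text{dis}(R)=\frac{\pi}{2}$ forces $d_X(x_0,x)\ge\pi-\frac{\pi}{2}=\frac{\pi}{2}$. Combining with the previous inequality, $\frac{1}{2}\text{diam}(X)\ge\frac{\pi}{2}$, that is, $\text{diam}(X)\ge\pi$.

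The main obstacle is (ii). I would argue by contradiction: suppose $d_X(y,[x_-,x_+])<\frac{\pi}{4}$ for all $y\in X$; then, $X$ being compact (\Cref{45trgdfv4re}), there is $r<\frac{\pi}{4}$ with $X$ contained in the $r$-neighbourhood of the geodesic $[x_-,x_+]$, whose length equals $\text{diam}(X)\ge\pi$. The aim is to show that such a ``thin near-interval'' $X$ admits no correspondence of distortion $\frac{\pi}{2}$ to $\mathbb{S}^1$, in the spirit of the interval bounds $d_{\textup{GH}}(I,\mathbb{S}^1)\ge\frac{\pi}{3}$ in the literature: composing $R$ with $\Pi$ and an arclength parametrization $\sigma\colon[0,\text{diam}(X)]\to[x_-,x_+]$ and picking a representative in each $R[p]$ yields a relation between $\mathbb{S}^1$ and a bounded subset of $\mathbb{R}$ of distortion at most $\frac{\pi}{2}+2r$, and one must rule this out. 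A crude equispaced-sampling argument only excludes $r$ up to about $\frac{\pi}{12}$, so the delicate step is to use the full strength of the extremal configuration — that $X$ is an $\mathbb{R}$-tree and that $R$ is a \emph{closed} correspondence achieving distortion \emph{exactly} $\frac{\pi}{2}$ — for instance via test configurations on $\mathbb{S}^1$ adapted to the tree, or a covering/degree argument tracking how the pushed-forward map must sweep $[x_-,x_+]$ as its parameter winds once around $\mathbb{S}^1$. This rigidity step is the one I expect to require the most care.
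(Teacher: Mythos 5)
Your overall decomposition matches the paper's: step (i) (that $\text{diam}(X)\geq\pi$, via the midpoint $x_0$, an antipodal pair, and the fact that every point of an $\mathbb{R}$-tree is within $\frac{1}{2}\text{diam}(X)$ of $x_0$) is exactly the paper's argument, and your bookkeeping deducing the tripod structure from (i), (ii) and $\text{len}(X)\leq\frac{5\pi}{4}$ is fine (the paper does the same, slightly more tersely). However, there is a genuine gap: step (ii), the existence of a point $x_1$ with $d_X(x_1,\Pi(x_1))\geq\frac{\pi}{4}$, is the heart of the claim, and you do not prove it — you reduce it to a rigidity statement about ``thin near-intervals,'' note that a crude sampling argument only reaches $r\approx\frac{\pi}{12}$, and defer the sharp case to an unspecified ``test configuration or covering/degree argument.'' As stated, the proposal does not establish the claim.

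For the record, the paper's argument for this step is indeed a discrete intermediate-value (winding) argument, so your second suggested direction is the right one, but the execution matters. Assuming $d_X(x,\Pi(x))<\frac{\pi}{4}$ for all $x$, choose for each $p\in\mathbb{S}^1$ a point $x_p\in R[p]$. Since $d_X(x_p,x_{-p})\geq\pi-\frac{\pi}{2}=\frac{\pi}{2}$ while both points are within $<\frac{\pi}{4}$ of $[x_-,x_+]$, Equation \eqref{43frreefseewrewrerwwer} forces $\Pi(x_p)\neq\Pi(x_{-p})$, so the sign of the order of $\Pi(x_p)$ versus $\Pi(x_{-p})$ along $[x_-,x_+]$ is well defined; it is opposite at $p_0$ and at $-p_0$, so along a half-circle $(p_t)_{t\in[0,\pi]}$ there is a first parameter $s$ where it flips. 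Using that $R$ is \emph{closed} to pass to limits of the $x_{q_n},x_{-q_n}$ for $q_n\to p_s$, one extracts three points $x,y,x'$ with $\Pi(x)<\Pi(y)<\Pi(x')$, $x,x'\in R[p]$ and $y\in R[-p]$ for some $p\in\{p_s,-p_s\}$; then the tree identity $d_X(x,x')=d_X(x,y)+d_X(y,x')-2d_X(y,\Pi(y))>\frac{\pi}{2}+\frac{\pi}{2}-2\cdot\frac{\pi}{4}=\frac{\pi}{2}$ contradicts $\text{dis}(R)=\frac{\pi}{2}$. Note that this argument works directly with the tree $X$ and the projection $\Pi$; your proposed reduction to a subset of $\mathbb{R}$ with distortion $\frac{\pi}{2}+2r$ loses exactly the slack that the equality case needs, which is why the crude bound stalls at $r\approx\frac{\pi}{12}$ — the point is to keep the error term $2d_X(y,\Pi(y))<\frac{\pi}{2}$ strict rather than to absorb it into the distortion.
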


\begin{proof}
We first deduce that $\text{diam}(X)\geq\pi$. To do it, note that any point $x\in X$ is at distance $\leq\frac{\text{diam}(X)}{2}$ from $x_0$: if not, depending on whether $\Pi(x)\in[x_-,x_0]$ or $\Pi(x)\in[x_0,x_+]$ we would have $d_X(x,x_+)>\text{diam}(X)$ or $d_X(x,x_-)>\text{diam}(X)$, a contradiction. Now let $p_0\in R[x_0]$ and let $y\in R[-p_0]$. Using that $\text{dis}(R)\leq\frac{\pi}{2}$, we obtain
\begin{equation*}
\text{diam}(X)
\geq2d_X(x_0,y)
\geq2\left(d_{\mathbb{S}^1}(p_0,-p_0)
-\frac{\pi}{2}\right)
=\pi.
\end{equation*}
We will now deduce the rest of the theorem. First note that for all $x\in X$ we have $d_X(x,\Pi(x))\leq\frac{\pi}{4}$; indeed, as $X$ has length $\leq\frac{5\pi}{4}$, we have
\begin{equation*}
d_X(x,\Pi(x))=\text{length}([x,\Pi(x)])\leq\text{length}(X)-\text{length}([x_-,x_+])\leq\frac{5\pi}{4}-\pi\leq\frac{\pi}{4}.
\end{equation*}
And if $d_X(x,\Pi(x))=\frac{\pi}{4}$ for some $x\in X$, then all the inequalities above are equalities, so $d_X(x_-,x_+)=\pi$ and $\text{length}(X)=\text{length}([x_-,x_+])+\text{length}([x,\Pi(x)])$, so the tree $X$ must be the union $[x_-,x_+]\cup[x,\Pi(x)]$ and we would be done proving \Cref{45trefd43}. We can therefore suppose $d_X(x,\Pi(x))<\frac{\pi}{4}$ for all $x\in X$, and we will obtain a contradiction.

For each $p\in\mathbb{S}^1$ choose a point $x_p\in R[p]$. Note that we have $d_X(x_p,x_{-p})\geq d_{\mathbb{S}^1}(p,-p)-\frac{\pi}{2}=\frac{\pi}{2}$ for all $p\in\mathbb{S}^1$. Moreover, $\Pi(x_p)\neq\Pi(x_{-p})$ for all $p$, because
\begin{equation}\label{43trefrfd}
d_X(\Pi(x_{-p}),\Pi(x_{-p}))\geq d_X(x_p,x_{-p})-d_X(x_p,\Pi(x_p))-d_X(x_{-p},\Pi(x_{-p}))>\frac{\pi}{2}-\frac{\pi}{4}-\frac{\pi}{4}=0.
\end{equation}
So, if we order the segment $[x_-,x_+]$ by $x>y$ iff $d_X(x_-,x)>d_X(x_-,y)$, then for any $p\in\mathbb{S}^1$ we have either $\Pi(x_p)>\Pi(x_{-p})$ or $\Pi(x_p)<\Pi(x_{-p})$.

Now, let $(p_t)_{t\in[0,\pi]}$ be a geodesic segment of length $\pi$ in $\mathbb{S}^1$, so that $p_\pi=-p_0$. We can suppose w.l.o.g. that $\Pi(x_{p_0})<\Pi(x_{-p_0})$. Thus, $\Pi(x_{p_\pi})>\Pi(x_{-p_\pi})$.

Let $s:=\inf\{t\in[0,\pi];\Pi(x_{p_0})>\Pi(x_{-p_0})\}\in[0,\pi]$. Suppose that $\Pi(x_{p_s})>\Pi(x_{-p_s})$ (the case $\Pi(x_{p_s})<\Pi(x_{-p_s})$ is similar). Then by definition of $s$, there has to be some sequence $(q_n)_n$ in $\mathbb{S}^1$ convergent to $p_s$ and such that $\Pi(x_{q_n})<\Pi(x_{-q_n})$ for all $n$. By taking a subsequence, we can assume that $(x_{q_n})_n,(x_{-q_n})_n$ converge when $n\to\infty$ to some points $x_\infty,x_{-\infty}\in X$ respectively, so that $\Pi(x_\infty)\leq\Pi(x_{-\infty})$ and, as $R\subseteq X\times\mathbb{S}^1$ is closed, we have $x_\infty\in R[p_s]$ and $x_{-\infty}\in R[-p_s]$. 

So we have points $x_{p_s},x_\infty\in R[p_s]$ and $x_{-p_s},x_{-\infty}\in R[-p_s]$ with $\Pi(x_{p_s})\geq\Pi(x_{-p_s})$ and $\Pi(x_\infty)\leq\Pi(x_{-\infty})$. Thus, we cannot have that both $\Pi(x_{p_s})$ and $\Pi(x_{\infty})$ are strictly smaller than both $\Pi(x_{-p_s})$ and $\Pi(x_{-\infty})$ or viceversa. Equivalently, there are points $x,y,x'\in\{x_{p_s},x_{-p_s},x_\infty,x_{-\infty}\}$ such that $\Pi(x)\leq\Pi(y)\leq\Pi(x')$ and for some $p\in\{p_s,-p_s\}$ we have $x,x'\in R[p]$ and $y\in R[-p]$ (this in turn implies that $\Pi(x)<\Pi(y)<\Pi(x')$: we cannot have equality by \Cref{43trefrfd}). But then, we have a contradiction: as $\text{dis}(R)=\frac{\pi}{2}$,
\begin{multline*}
\frac{\pi}{2}
\geq d_X(x,x')=d_X(x,\Pi(x))+d_X(\Pi(x),\Pi(y))
+d_X(\Pi(y),\Pi(x'))+d_X(\Pi(x'),x')\\
=d_X(x,y)+d_X(y,x')-2d(y,\Pi(y))
>\frac{\pi}{2}+\frac{\pi}{2}-2\frac{\pi}{4}
=\frac{\pi}{2}.\qedhere
\end{multline*}
\end{proof}

Now, let $[x_1,x_2]$ be the segment of length $\frac{\pi}{4}$ we append to $[x_-,x_+]$ to obtain $X$, with $x_2\in[x_-,x_+]$. We necessarily have $d_X(x_0,x_2)\leq\frac{\pi}{4}$ (as $\text{diam}(X)=\pi$), and we may assume without loss of generality that $d_X(x_-,x_2)\leq d_X(x_+,x_2)$. Thus we already have a very concrete description of $X$, illustrated in \Cref{ES342ew}. Note that $X$ is determined up to isometry by $d_X(x_0,x_2)$, so we will be done if we prove that $x_2=x_0$, in which case the space $X$ is isometric to E.

\begin{figure}[h]
    \centering
    \includegraphics[width=0.6\linewidth]{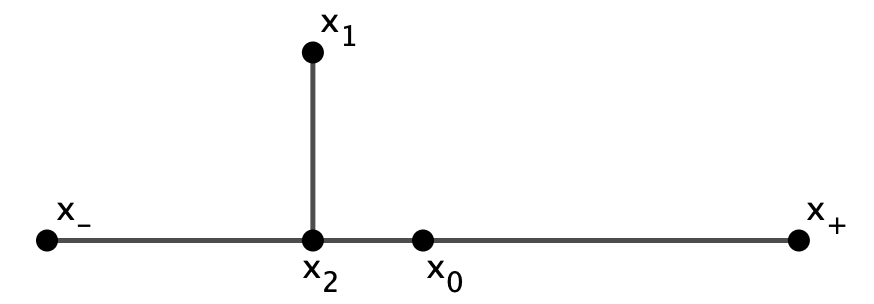}
    \caption{Notation for the tree $X$.}
    \label{ES342ew}
\end{figure}

For each $p\in\mathbb{S}^1$, we will denote by $I_p\subseteq\mathbb{S}^1$ the open interval centered at $p$ with radius $\frac{\pi}{4}$. Also, we divide the tree $X$ into two parts: $X_+:=[x_0,x_+]$ and $X_-=[x_0,x_-]\cup[x_0,x_1]$.

\begin{claim}\label{45tref44343}
Let $p\in R[x_0]$. Then $R[-p]\subseteq\{x_-,x_1,x_+\}$, and exactly one of the following cases happens:
\begin{enumerate}
    \item[\mylabel{45tref44343case1}{Case 1.}] $R[-p]=\{x_+\}$. Then no point of $I_{-p}$ can be related to any point of $X_-$.
    \item[\mylabel{45tref44343case2}{Case 2.}] $R[-p]\subseteq\{x_-,x_1\}$. Then no point of $I_{-p}$ can be related to a point of $X_+$.
\end{enumerate}
\end{claim}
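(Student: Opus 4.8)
The plan is to determine $R[-p]$ first, then to separate the two cases, and finally to see that both exclusion statements are instances of a single two-line distortion estimate.

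\emph{Determining $R[-p]$.} Since $(x_0,p)\in R$ and $\text{dis}(R)=\frac{\pi}{2}$, every $x\in R[-p]$ satisfies $d_X(x_0,x)\ge d_{\mathbb{S}^1}(p,-p)-\frac{\pi}{2}=\frac{\pi}{2}$. I then use the explicit description of $X$ from \Cref{45trefd43} and \Cref{ES342ew}: $X=[x_-,x_+]\cup[x_2,x_1]$ where $d_X(x_-,x_+)=\pi$, $x_0$ is the midpoint of $[x_-,x_+]$, $x_2\in[x_-,x_0]$ with $d_X(x_0,x_2)\le\frac{\pi}{4}$, and $d_X(x_2,x_1)=\frac{\pi}{4}$. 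A direct computation with \Cref{43frreefseewrewrerwwer} shows that the only points of $X$ at distance $\ge\frac{\pi}{2}$ from $x_0$ are $x_-$ and $x_+$ when $d_X(x_0,x_2)<\frac{\pi}{4}$, and are $x_-,x_+,x_1$ when $d_X(x_0,x_2)=\frac{\pi}{4}$. In every case $R[-p]\subseteq\{x_-,x_1,x_+\}$, and I also record the fact, needed below, that $x_1\in R[-p]$ forces $d_X(x_0,x_2)=\frac{\pi}{4}$, hence $d_X(x_0,x_1)=\frac{\pi}{2}$.

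\emph{The dichotomy.} Because $R$ is a correspondence, $R[-p]\ne\varnothing$, so it suffices to show that $x_+\in R[-p]$ forces $R[-p]=\{x_+\}$. If $x_-\in R[-p]$ as well, then $(x_+,-p),(x_-,-p)\in R$ and $d_X(x_+,x_-)=\pi>\frac{\pi}{2}=\text{dis}(R)$, impossible. If $x_1\in R[-p]$ as well, then by the recorded fact $d_X(x_0,x_1)=\frac{\pi}{2}$, and the geodesic $[x_+,x_1]$ passes through $x_0$ (by \Cref{43frreefseewrewrerwwer}), so $d_X(x_+,x_1)=\frac{\pi}{2}+\frac{\pi}{2}=\pi>\frac{\pi}{2}$, again impossible. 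Hence either $x_+\in R[-p]$, giving Case 1 with $R[-p]=\{x_+\}$, or $x_+\notin R[-p]$, giving Case 2 with $\varnothing\ne R[-p]\subseteq\{x_-,x_1\}$.

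\emph{The exclusion statements.} Suppose, towards a contradiction, that $q\in I_{-p}$ and $(x,q)\in R$ with $x$ on the forbidden side of $x_0$: $x\in X_-$ in Case 1, $x\in X_+$ in Case 2. Pick a witness $y\in R[-p]$ on the opposite side of $x_0$: take $y=x_+$ in Case 1, and any element of $R[-p]\subseteq\{x_-,x_1\}$ in Case 2. In both situations $d_X(y,x_0)=\frac{\pi}{2}$ (for $y=x_1$ this is the recorded fact), and the geodesic $[y,x]$ runs through $x_0$, so \Cref{43frreefseewrewrerwwer} gives $d_X(y,x)=\frac{\pi}{2}+d_X(x_0,x)$. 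Since $q\in I_{-p}$ means $d_{\mathbb{S}^1}(-p,q)<\frac{\pi}{4}$, we get $d_{\mathbb{S}^1}(p,q)\ge\pi-d_{\mathbb{S}^1}(-p,q)>\frac{3\pi}{4}$. Applying $\text{dis}(R)=\frac{\pi}{2}$ to $(x_0,p),(x,q)\in R$ yields $d_X(x_0,x)\ge d_{\mathbb{S}^1}(p,q)-\frac{\pi}{2}>\frac{\pi}{4}$, while applying it to $(y,-p),(x,q)\in R$ yields $d_X(y,x)\le d_{\mathbb{S}^1}(-p,q)+\frac{\pi}{2}<\frac{3\pi}{4}$, that is, $d_X(x_0,x)<\frac{\pi}{4}$ --- a contradiction. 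The only delicate point I foresee is the bookkeeping in the first two steps: $x_1$ behaves differently from $x_\pm$, since it can enter $R[-p]$ only when the appended edge $[x_2,x_1]$ attaches at the exact midpoint of $[x_0,x_-]$, and \Cref{43frreefseewrewrerwwer} must be applied carefully to evaluate each distance in the tripod. Once that is pinned down, the last step is a uniform short estimate covering both cases, so I expect no real obstacle there.
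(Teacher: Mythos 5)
Your proposal is correct and follows essentially the same route as the paper: bound $d_X(x_0,\cdot)$ from below to pin $R[-p]$ inside $\{x_-,x_1,x_+\}$, use $\mathrm{dis}(R)=\frac{\pi}{2}$ on the pair of antipodes to get the dichotomy, and derive the exclusion from the two distortion inequalities at $(x_0,p)$ and $(y,-p)$. The only difference is presentational — you treat the three witnesses $x_+,x_-,x_1$ uniformly via the identity $d_X(y,x)=\frac{\pi}{2}+d_X(x_0,x)$, where the paper writes out $y=x_+$ and declares the other cases similar.
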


\begin{proof}
There has to be some point $y$ in $R[-p]$, because $R$ is a correspondence. As $R$ has distortion $\frac{\pi}{2}$ and $d_{\mathbb{S}^1}(p,-p)=\pi$, we have $d_X(x_0,y)\geq\frac{\pi}{2}$, which implies $y\in\{x_-,x_1,x_+\}$ (and $y=x_1$ may only happen if $d_X(x_0,x_2)=\frac{\pi}{4}$).

If $y=x_+$, then for any $p_1\in I_{-p}$ and $x\in R[p_1]$ we have:
\begin{align*}
d_X(x_0,x)&\geq d_{\mathbb{S}^1}(p,p_1)-\frac{\pi}{2}>\frac{\pi}{4}.\\
d_X(x_+,x)&\leq d_{\mathbb{S}^1}(-p,p_1)+\frac{\pi}{2}<\frac{3\pi}{4}.
\end{align*}
But no point $x\in X_-$ can satisfy both of these inequalities at the same time, thus, as we wanted, no point of $I_{-p}$ can be related to any point of $X_-$. The cases $y=x_1$ and $y=x_-$ are similar. Also note that if $x_+\in R[p]$, $x_1$ or $x_-$ cannot be in $R[p]$, because they are at distance $>\frac{\pi}{2}$ from $x_+$ and $\textup{dis}(R)=\frac{\pi}{2}$.
\end{proof}

\begin{claim}\label{54tergfd565445re}
We have $R[x_0]=\{p_0,p_0'\}$ for some $p_0,p_0'\in\mathbb{S}^1$ with $d_{\mathbb{S}^1}(p_0,p_0')=\frac{\pi}{2}$ and such that $R[-p_0]=\{x_+\}$, $R[-p_0']\subseteq\{x_-,x_1\}$.
\end{claim}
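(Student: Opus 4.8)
The plan is to pin down $R[x_0]$ using \Cref{45tref44343}. That claim says each $p\in R[x_0]$ is exactly one of: a \emph{Case 1 point} (so $R[-p]=\{x_+\}$ and no point of the arc $I_{-p}$ is related to any point of $X_-$) or a \emph{Case 2 point} (so $R[-p]\subseteq\{x_-,x_1\}$ and no point of $I_{-p}$ is related to any point of $X_+$). Preliminary observations: $R[x_0]\neq\varnothing$ since $R$ is a correspondence; $R[x_0]$ is compact since $R$ is closed; and applying $\mathrm{dis}(R)=\tfrac\pi2$ to pairs in $\{x_0\}\times R[x_0]$ gives $\mathrm{diam}_{\mathbb{S}^1}(R[x_0])\le\tfrac\pi2$, so $R[x_0]$ lies in a closed arc of length $\le\tfrac\pi2$. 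I would then prove: (a) $R[x_0]$ contains at least one Case 1 point $p_0$ and at least one Case 2 point $p_0'$; (b) $d_{\mathbb{S}^1}(p_0,p_0')=\tfrac\pi2$, so $R[x_0]$ lies in the closed arc $[p_0,p_0']$ of length $\tfrac\pi2$; (c) $R[x_0]=\{p_0,p_0'\}$.

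For (a), suppose for contradiction that every point of $R[x_0]$ is a Case 1 point, and set $U:=\bigcup_{p\in R[x_0]}I_{-p}$, an open arc of length $\ge\tfrac\pi2$. For $q\in U$, picking $p\in R[x_0]$ with $q\in I_{-p}$, Case 1 gives $R[q]\cap X_-=\varnothing$, hence $R[q]\subseteq X\setminus X_-=(x_0,x_+]$; pairing $R[q]$ against $(x_0,p)\in R$ then gives $d_X(x_0,R[q])\ge d_{\mathbb{S}^1}(q,p)-\tfrac\pi2=\tfrac\pi2-d_{\mathbb{S}^1}(q,-p)$, and since $R[q]$ lies on the geodesic segment $[x_0,x_+]$ of length $\tfrac\pi2$ this forces $R[q]$ to lie within distance $d_{\mathbb{S}^1}(q,-p)<\tfrac\pi4$ of $x_+$; in particular $U$ is disjoint from $R[x_0]$, from $R[x_-]$ and from $R[x_1]$, while $-R[x_0]\subseteq R[x_+]$. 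One then combines these facts with the separation $d_{\mathbb{S}^1}(R[x_-],R[x_+])\ge\tfrac\pi2$ coming from $d_X(x_-,x_+)=\pi$ and a careful bookkeeping of arc lengths on $\mathbb{S}^1$ to exhibit a point of $\mathbb{S}^1$ with no $R$-partner, contradicting that $R$ is a correspondence. The case ``every point of $R[x_0]$ is a Case 2 point'' is ruled out symmetrically, with $x_-,x_1$ in place of $x_+$ and using that, by the proof of \Cref{45tref44343}, $x_1\in R[-p]$ can occur only if $d_X(x_0,x_2)=\tfrac\pi4$.

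For (b) and (c), fix a Case 1 point $p_0$ and a Case 2 point $p_0'$ of $R[x_0]$. Since $p_0'$ is Case 2, $R[-p_0']$ contains $x_-$ or $x_1$; in the first case $d_X(x_+,x_-)=\pi$, while in the second the remark above forces $d_X(x_0,x_2)=\tfrac\pi4$ and hence $d_X(x_+,x_1)=d_X(x_+,x_2)+\tfrac\pi4=\tfrac{3\pi}4+\tfrac\pi4=\pi$. Either way there is $y\in R[-p_0']$ with $d_X(x_+,y)=\pi$, and since $x_+\in R[-p_0]$, pairing $(x_+,-p_0)$ with $(y,-p_0')$ and using $\mathrm{dis}(R)=\tfrac\pi2$ gives $d_{\mathbb{S}^1}(p_0,p_0')=d_{\mathbb{S}^1}(-p_0,-p_0')\ge\tfrac\pi2$; with the diameter bound this yields $d_{\mathbb{S}^1}(p_0,p_0')=\tfrac\pi2$, and the minimal arc containing $R[x_0]$ is exactly $[p_0,p_0']$. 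Finally, let $p''\in R[x_0]$. If $p''$ is Case 1 then $x_+\in R[-p'']$, and the same pairing (with $p''$ in place of $p_0$) gives $d_{\mathbb{S}^1}(p'',p_0')\ge\tfrac\pi2$; as $p''$ lies on the length-$\tfrac\pi2$ arc $[p_0,p_0']$ this forces $p''=p_0$. Symmetrically, if $p''$ is Case 2 then $p''=p_0'$. Hence $R[x_0]=\{p_0,p_0'\}$, and $R[-p_0]=\{x_+\}$, $R[-p_0']\subseteq\{x_-,x_1\}$ are just the defining properties of Case 1 and Case 2.

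The main obstacle is step (a): the Case 1 / Case 2 dichotomy and the coarse diameter bounds do not by themselves produce an unmatched point of $\mathbb{S}^1$, so the ``no partner'' contradiction has to be made quantitative, tracking the lengths of $U$, of $-R[x_0]$, and of the regions around $R[x_-]$, $R[x_+]$, $R[x_1]$ and their antipodes simultaneously. It may well be cleanest to isolate the precise covering statement needed as a short preliminary sub-claim before carrying out (a), after which (b) and (c) follow by the bookkeeping above.
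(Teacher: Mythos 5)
Your steps (b) and (c) are correct: once a Case 1 point $p_0$ and a Case 2 point $p_0'$ are known to exist in $R[x_0]$, pairing $(x_+,-p_0)$ with a point $(y,-p_0')$ satisfying $d_X(x_+,y)=\pi$ gives $d_{\mathbb{S}^1}(p_0,p_0')\geq\frac{\pi}{2}$, and the diameter bound $\mathrm{diam}(R[x_0])\leq\frac{\pi}{2}$ then pins down $R[x_0]=\{p_0,p_0'\}$ exactly as you say. (The paper obtains the same separation by a slightly different device: if $d_{\mathbb{S}^1}(p_0,p_0')<\frac{\pi}{2}$ then $I_{-p_0}\cap I_{-p_0'}\neq\varnothing$ and its points could be related to neither $X_-$ nor $X_+$, hence to nothing. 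Both routes are fine.) The problem is step (a), which you yourself identify as the main obstacle: you assert that if every point of $R[x_0]$ were Case 1, then ``careful bookkeeping of arc lengths'' would exhibit a point of $\mathbb{S}^1$ with no $R$-partner, but you never produce that point, and the facts you list (that $R[q]$ lies near $x_+$ for $q\in U$, that $-R[x_0]\subseteq R[x_+]$, that $R[x_-]$ and $R[x_+]$ are $\frac{\pi}{2}$-separated) do not obviously force one: nothing you have written prevents $\mathbb{S}^1\setminus U$ from accommodating $R^{-1}(X_-)$ and everything else that needs a partner. As it stands, the existence of both a Case 1 and a Case 2 point in $R[x_0]$ --- the heart of the claim --- is not established.

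The paper's contradiction in step (a) is located not at an unmatched point of $\mathbb{S}^1$ but at $R[x_0]$ itself, via a limiting argument that uses the closedness of $R$. In your all-Case-1 scenario it runs as follows: for $y\in(x_0,x_+]$ with $d_X(y,x_0)$ small one has $X_+\subseteq B_X\left(y,\frac{\pi}{2}\right)$, so for any $q\in R[y]$ every point of $R[-q]$ is at distance $\geq\pi-\frac{\pi}{2}=\frac{\pi}{2}$ from $y$ and hence lies in $X_-$; since by assumption no point of $\bigcup_{p\in R[x_0]}I_{-p}$ is related to any point of $X_-$, this forces $-q\notin\bigcup_{p\in R[x_0]}I_{-p}$, i.e.\ $q\notin\bigcup_{p\in R[x_0]}I_p$. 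Letting $y\to x_0$, choosing $q_y\in R[y]$ and passing to a cluster point (which lies in $R[x_0]$ because $R$ is closed, and outside the open set $\bigcup_{p\in R[x_0]}I_p$ because its complement is closed) yields a point of $R[x_0]$ not contained in $\bigcup_{p\in R[x_0]}I_p$ --- absurd, since each $p\in R[x_0]$ lies in its own $I_p$. The all-Case-2 scenario is symmetric, with $y\in[x_-,x_0]$. You need this argument, or a completed version of your covering argument, before (b) and (c) can be invoked.
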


\begin{proof}
We first prove that there are points $p_0,p_0'\in R[x_0]$ in \ref{45tref44343case1} and \ref{45tref44343case2} of \Cref{45tref44343} respectively. Suppose for contradiction that all the points of $R[x_0]$ are in \ref{45tref44343case2} (the same argument works for \ref{45tref44343case1}), so that all points in $\bigcup_{p\in R[x_0]}I_{-p}$ are not related to points of $X_+$. Note that any point $y\in[x_-,x_0]$ with $0<d_X(y,x_0)<0.1$ satisfies $X_-\subseteq B_X\left(y,\frac{\pi}{2}\right)$. So as $\text{dis}(R)\leq\frac{\pi}{2}$, for any $q\in R[y]$ its antipodal $-q$ must be related to some point of $X^+$. Thus $q\not\in\bigcup_{p\in R[x_0]}I_p$. Letting $y$ approach $x_0$, and using that $R\subseteq X\times\mathbb{S}^1$ is closed, we obtain that there is some point of $R[x_0]$ outside $\bigcup_{p\in R[x_0]}I_p$, a contradiction.

So let $p_0,p_0'\in R[x_0]$ satisfy \ref{45tref44343case1} and \ref{45tref44343case2} of \Cref{45tref44343} respectively. Then we necessarily have $d_{\mathbb{S}^1}(p_0,p_0')\geq\frac{\pi}{2}$; if not, $I_{-p_0}\cap I_{-p_0'}$ would be nonempty, and its points could not be related to either $X_-$ or $X_+$. Now, as $\textup{diam}(R[x_0])\leq\frac{\pi}{2}$, $R[x_0]$ has to be contained in the geodesic segment $[p_0,p_0']$, and all points in $R[x_0]$ have to be at distance $\geq\frac{\pi}{2}$ from either $p_0$ or $p_0'$ for the same reasoning we just stated. So $R[x_0]=\{p_0,p_0'\}$, as we wanted.
\end{proof}

We now conclude the proof of \Cref{34we23ioepqwpw9}. Let $p_0,p_0'$ be as in \Cref{54tergfd565445re} and let $p_3,p_4\in\mathbb{S}^1$ be at the same distance from $p_0,p_0'$, as in the figure below.

\begin{figure}[h]
    \centering
    \includegraphics[width=0.4\linewidth]{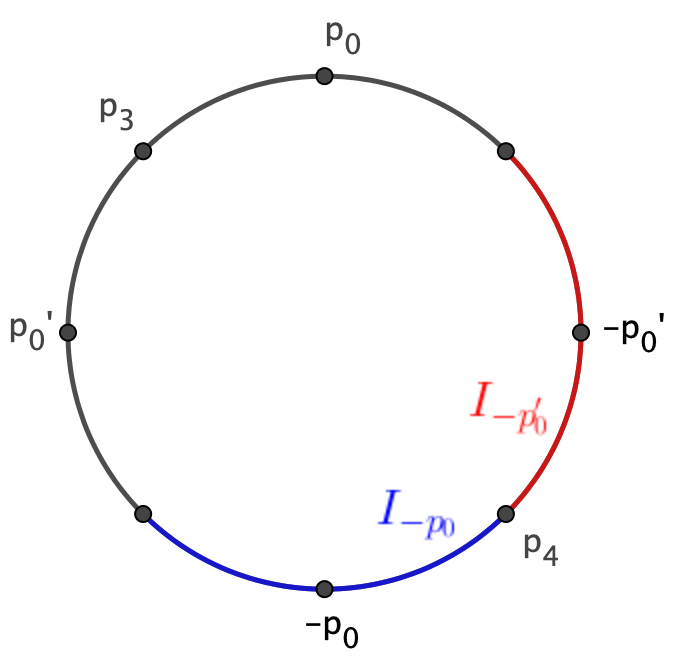}
\end{figure}

Note that, as $R$ is closed in $X\times\mathbb{S}^1$ and all points of $I_{-p_0}$ are related to some point of $X_+$, $p_4$ has to be related to some point $x_4^+\in X_+$. Similarly, $p_4$ is related to some point $x_4^-\in X_-$. Note that $d_X(x_0,x_4^+)+d_X(x_0,x_4^-)=d_X(x_4^+,x_4^-)$, so using that $\textup{dis}(R)=\frac{\pi}{2}$, we have
\begin{multline*}
\frac{\pi}{2}
=d_{\mathbb{S}^1}(p_4,p_4)+\frac{\pi}{2}
\geq d_X(x_4^+,x_4^-)
=d_X(x_0,x_4^+)+d_X(x_0,x_4^-)\\
\geq\left(d_{\mathbb{S}^1}(p_0,p_4)-\frac{\pi}{2}\right)+\left(d_{\mathbb{S}^1}(p_0,p_4)-\frac{\pi}{2}\right)
=\frac{\pi}{4}+\frac{\pi}{4}=\frac{\pi}{2}.
\end{multline*}
Thus the inequalities above are equalities, and $d_X(x_0,x_4^+)=d_X(x_0,x_4^-)=\frac{\pi}{4}$.

Meaning that $x_4^+$ is the midpoint of $[x_0,x_+]$ and $x_4^-$ is either the midpoint of $[x_0,x_-]$ or some point in $[x_0,x_1]$. Now, let $x_3\in R[p_3]$. As $d_{\mathbb{S}^1}(p_3,p_4)=\pi$ and $\textup{dis}(R)=\frac{\pi}{2}$, we have $d_X(x_3,x_4^+),d_X(x_3,x_4^-)\geq\frac{\pi}{2}$. Now we divide in cases:
\begin{itemize}
    \item $x_4^-$ is the midpoint of $[x_0,x_-]$. Then there are no points in $X$ at distance $\geq\frac{\pi}{2}$ from both $x_4^-$ and  $x_4^+$, unless $x_2=x_0$ and $x_3=x_1$, proving \Cref{34we23ioepqwpw9}.
    \item $x_4^-$ is in $[x_0,x_1]$. If $d_X(x_0,x_2)=\frac{\pi}{4}$, then again $x_4^-=x_2$ is the midpoint of the segment $[x_0,x_-]$, so we are back to the previous case. If $d_X(x_0,x_2)<\frac{\pi}{4}$, then $R[-p_0']=\{x_-\}$ by the proof of \Cref{45tref44343}. In this case we also have $x_2=x_0$: if not, the inequalities $d_X(x_3,x_4^+),d_X(x_3,x_4^-)\geq\frac{\pi}{2}$ would imply that $x_3$ is at distance $<\frac{\pi}{4}$ from $x_-$, which contradicts the facts that $\textup{dis}(R)=\frac{\pi}{2}$ and $d_{\mathbb{S}^1}(p_3,-p_0)=\frac{3\pi}{4}$.
\end{itemize}

\bibliographystyle{amsalpha}

\end{document}